\begin{document}

\newtheorem{theo}{Theorem}[section]
\newtheorem{prop}[theo]{Proposition}
\newtheorem{coro}[theo]{Corollary}
\newtheorem{lemm}[theo]{Lemma}

\newtheorem{claim}[theo]{Claim}
\newtheorem{conjecture}[theo]{Conjecture}
\theoremstyle{remark}
\newtheorem{rema}[theo]{\bf Remark}
\newtheorem{defi}[theo]{\bf Definition}
\newtheorem{example}[theo]{\bf Example}

\newcommand{\Ch}{\widehat{\mathbb C}}

\title[Sharp upper bounds for the number of fixed points components]{Sharp upper bounds for the number of fixed points components of two and three symmetries of handlebodies}

\author{Ruben A. Hidalgo}
\address{Departamento de Matem\'atica y Estad\'{\i}stica, Universidad de La Frontera, Temuco, Chile}
\email{ruben.hidalgo@ufrontera.cl}

\thanks{Partially  supported by Project FONDECYT 1190001}

\keywords{Riemann surfaces, Schottky groups, symmetries}
\subjclass[2010]{30F10, 30F40}

\begin{abstract}
An extended Kleinian group whose orientation-preserving half is a Schottky group is called an extended Schottky group. These groups correspond to the real points in the Schottky space. Their geometric structures is well known and it permits to provide information on the locus of fixed points of symmetries of handlebodies.
A group generated by two different extended Schottky groups, both with the same orientation-preserving half, is called a dihedral extended Schottky group.
We provide a structural description of these type of groups and, as a consequence, we obtain sharp upper bounds for the sum of the cardinalities of the connected components of the locus of fixed points of two or three different symmetries of a handlebody.  
\end{abstract}

\maketitle

\section{Introduction}

A uniformization of  a closed Riemann surface $S$ is a triple $(\Delta,\Gamma,P:\Delta \to S)$ where $\Gamma$ is a Kleinian group, $\Delta$ is a $\Gamma$-invariant connected component of its region of discontinuity and $P:\Delta \to S$ is a regular covering map with $\Gamma={\rm Deck}(P)$ (in the classical terminology of Kleinian groups the pair $(\Gamma,\Delta)$ is called a function group; if $\Delta$ is simply connected, it is a B-group). The collection of uniformizations of $S$ is partially ordered where the highest ones are produced when $\Delta$ is simply-connected (for instance, if $S$ has genus at least two, $\Delta$ can be assumed to be the hyperbolic plane and these are the Fuchsian uniformizations). Koebe's retrosection theorem \cite{Bers,Koebe} asserts that $S$ can be uniformized by a Schottky group and, in this case, these provide the Schottky uniformizations. It is known that Schottky uniformizations provide the lowest ones and have not been exploited in this context as for the case of  Fuchsian ones. 

The Schottky space ${\mathcal S}_{g}$, consisting of  the ${\rm PSL}_{2}({\mathbb C})$-conjugacy classes of Schottky groups of rank $g \geq 1$, is a complex manifold. If $g=1$ it is isomorphic to the unit punctured disc and, for $g \geq 2$, it has dimension $3(g-1)$ \cite{Bers,Nag}. On ${\mathcal S}_{g}$ there is a natural real structure, this induced by the complex conjugation map. The fixed points of such a real structure can be identified with the ${\rm PSL}_{2}({\mathbb C})$-conjugacy classes of those extended Kleinian groups (discrete groups of conformal and anticonformal automorphisms of the Riemann sphere $\widehat{\mathbb C}$, necessarily containing anticonformal ones) admitting a Schottky group of rank $g$ as a normal subgroup of finite index.  Examples of these real points are given by the extended Schottky groups (extended Kleinian group whose orientation-preserving half is a Schottky group). The structural description of these groups, in terms of the Klein-Maskit combination theorems  \cite{Maskit:Comb, Maskit:Comb4},  was obtained in \cite{H-G:ExtendedSchottky} (we recall it in Section \ref{Sec:ExtendedSchottky}).

It is well known that a closed Riemann surface $S$ can be described by irreducible complex projective algebraic curves (this is a consequence of Riemann-Roch's theorem in one direction and the implicit function theorem in the other one). Weil's descent theorem \cite{Weil} asserts that such a curve can be chosen to be defined over the reals if and only if $S$ admits a symmetry (an anticonformal involution); in this case $S$ is called symmetric.
As a consequence of the known topological actions of symmetries, if $\tau:S \to S$ is a symmetry, then there is a Schottky uniformization $(\Delta,\Gamma,P:\Delta \to S)$ so that $\tau$ lifts, that is, there is an  anticonformal automorphism $\sigma:\Delta \to \Delta$ so that $\tau P= P\ \sigma$. The automorphism $\sigma$ is the restriction of an extended M\"obius transformation satisfying  $\sigma^{2} \in \Gamma$ and  $\sigma \Gamma \sigma^{-1}=\Gamma$. The group $K=\langle \Gamma,\sigma \rangle$ is an example of an extended Schottky group whose orientation-preserving half is the Schottky group $\Gamma$.
As a consequence, each symmetry on a closed Riemann surface can be realized by an extended Schottky group.

An extended Kleinian group generated by two different extended Schottky groups, both with the same orientation-preserving half, is called a dihedral extended Schottky group. In the above context of symmetries, the dihedral extended Schottky group induces two different symmetries on the surface uniformized by the Schottky group. In \cite{H-M:imaginary} there is provided an example of a Riemann surface admitting two different symmetries both of which cannot be realized by a dihedral extended Schottky group. 

The structural description of those dihedral extended Schottky groups containing no reflections was obtained in \cite{H-M:imaginary}.
In this paper we complete such a description to include the possibility of reflections in the dihedral extended Schottky groups (Theorem \ref{const0}). As an application, we obtain sharp upper bounds on the sum of connected components of fixed points of orientation reversiong involutions on handlebodies of genus at least two (Theorem \ref{sumak=2}). In a forthcomming article we plan to use this structural description to study the connectivity of the real locus of Schottky space.

\section{Symmetries on handlebodies}\label{Sec:handlebodies}
Let $\Gamma$ be a Schottky group of rank $g$, with region of discontinuity $\Omega$, and set $S_{\Gamma}=\Omega/\Gamma$ (a closed Riemann surface of genus $g$).
If ${\mathbb H}^{3}$ denotes the hyperbolic $3$-space, then 
$M_{\Gamma}=({\mathbb H}^{3} \cup \Omega)/\Gamma$ is a handlebody of genus $g$ (we say that $\Gamma$ induces a  Schottky structure on the handlebody).  Its interior $M_{\Gamma}^{0}={\mathbb H}^{3}/\Gamma$ carries a natural complete hyperbolic structure and $S_{\Gamma}$ is its conformal boundary. 

Let $\tau:M_{\Gamma} \to M_{\Gamma}$ be a symmetry, that is,  an order two orientation-reversing homeomorphism  whose restriction to $M_{\Gamma}^{0}$ is a hyperbolic isometry.  The symmetry $\tau$ induces by restriction a symmetry of $S_{\Gamma}$. By lifting $\tau$ to the universal cover, we obtain an extended Schottky group $K_{\tau}$ whose orientation-preserving half is $\Gamma$. As a consequence of the geometrical structure of extended Schottky groups \cite{H-G:ExtendedSchottky}, the locus of fixed points of $\tau$ has at most $g+1$  connected components ($\tau$ is called maximal if it has $g+1$ connected components of fixed points) and each of such connected components is either (i) an isolated point in $M_{\Gamma}^{0}$ or (ii) a $2$-dimensional bordered compact surface (which may or not be orientable) whose border is contained in its conformal boundary $S_{\Gamma}$ (see also \cite{Ka-Mc}). The projection of an isolated fixed point of $\tau$ produces a point in the orbifold $M_{\Gamma}^{0}/\langle \tau \rangle = {\mathbb H}^{3}/K_{\tau}$ admitting a neighborhood which  locally looks like a cone over the projective plane.

If $\widehat{\tau}$ is another symmetry of $M_{\Gamma}$, then (again by the lifting process as above) we obtain  
another extended Schottky group $K_{\widehat{\tau}}$ with $\Gamma$ as its orientation-preserving half. In this way, the group $K$, generated by $K_{\tau}$ and $K_{\widehat{\tau}}$, is a dihedral extended Schottky group. As a consequence of our structural description we obtain sharp upper bounds. In the following ${\mathbb D}_{r}$ denotes the dihedral group of order $2r$.

\begin{theo}\label{sumak=2}
Let $M$ be a handlebody  of genus $g \geq 2$, with a given Schottky structure.
\begin{enumerate}
\item If $\tau_{1}$ and  $\tau_{2}$ are two different symmetries of $M$, $q \geq 2$ is the order of $\tau_{1}\tau_{2}$ and $m_{j}$ is the number of 
connected components of fixed points of $\tau_{j}$, then 
 $$m_{1}+m_{2} \leq 2\left[ \frac{g-1}{q}\right]+4.$$ 
 Moreover, for every integer $q\geq 2$, the above upper bound is sharp for
infinitely many values of $g$.

\item If $\tau_{1}, \tau_{2}$  and $\tau_{3}$ are three different symmetries, $H=\langle \tau_{1},\tau_{2}, \tau_{3}\rangle$ and  $m_{j}$ is
the number of connected components of fixed points of $\tau_{j}$,  then
$$m_{1}+m_{2}+m_{3}\leq \left\{ \begin{array}{ll}
5 & \mbox{if} \; g=2.\\
8 & \mbox{if} \; g=3.\\
g+5 &  \mbox{if} \; g \geq 4 \mbox{ and $H \ncong {\mathbb Z}_{2} \times {\mathbb D}_{r}$ for any $r$.}\\
\dfrac{(r+1)g+5r-1}{r} &  \mbox{if} \; g \geq 4 \mbox{ and $H \cong {\mathbb Z}_{2} \times {\mathbb D}_{r}$ for some $r$.}\\
\end{array}
\right.
$$
Moreover, the above upper bounds are sharp for $g=2,3$ and for infinite many values of $g \geq 4$.
\end{enumerate}
\end{theo}

The above upper bounds are obtained in Section \ref{Sec:pruebasumak=2}. In Section \ref{Sec:4}, we construct explicit examples to see that they are sharp. 
Theorem \ref{sumak=2} asserts the following fact (already observed in \cite{H-M:imaginary} if both symmetries only have isolated fixed points).

\begin{coro}\label{maximales}
Let $\tau_{1}$ and $\tau_{2}$ be two different symmetries of a handlebody of genus $g \geq 2$, with a Schottky structure, such that $\tau_{j}$ has $m_{j}$ connected components of fixed points. Then $m_{1}+m_{2} \leq g+3$. In particular, (i) a handlebody of genus $g \geq 2$ admits at most one maximal symmetry and (ii) the upper  bound $m_{1}+m_{2}=g+3$ only occurs if $q=2$, that is, when
$\langle \tau_{1}, \tau_{2}\rangle={\mathbb Z}_{2}^{2}$.
\end{coro}

\begin{rema}[Connection to symmetries of Riemann surfaces]
Let $S$ be a closed Riemann surface of genus $g$. 
(1) If $\tau$ is a symmetry of $S$, then each connected component of fixed points of $\tau$ is a simple loop, called an oval and, by Harnack's theorem \cite{Harnack}, the total number of ovals is at most $(g+1)$. 
(2) If $\tau_{1}, \tau_{2}$ are two different symmetries of $S$, then in \cite{BCS} it was observed that the total number of ovals of these two symmetries is bounded above by $2(g-1)/q +4$ (if $q$ is odd) and $4g/q+2$ (if $q$ is even), where $q$ is the order of the product of them. If moreover, $q \geq 3$ and $q$ does not divides $g-1$, then the sharp upper bound is $[ 2(g-1)/q]+3$ (where $[\;]$  stands for the integer part) \cite{Ewa}. We may observe that the upper bounds given in Theorem \ref{sumak=2} are different from the ones obtained for the case of Riemann surfaces. This difference comes from the fact that there Riemann surfaces $S$ admitting two different symmetries which cannot be realized by a common Schottky group uniformizing $S$ \cite{H-M:imaginary}. The number of ovals of two symmetries is at most $2g+2$, in particular, there must be the possibility for $S$ to admit two different maximal symmetries. 
In \cite{Natanzon}, Natanzon proved that if a Riemann surface admits two maximal symmetries, then it is necessarily hyperelliptic. This is again a different situation as that for handlebodies (in a handlebody there is at most one maximal symmetry).
(3) In \cite{N1} there were obtained a sharp upper bound for the number of ovals for three different symmetries of $S$, this being $2(g+2)$. The above bounds are mainly a consequence of the well known structure of non-Euclidian crystallographic (NEC) groups \cite{MacBeath}. This upper bound is again different from that of Theorem \ref{sumak=2}.
\end{rema}

\section{Preliminaries and previous results}\label{Sec:1}
In this section we briefly review several definitions and basic facts we will need in the rest of the paper. More details on these topics may be found, for instance, in \cite{Maskit:book,MT}.

\subsection{Extended Kleinian groups}
We denote by
$\widehat{\mathbb M}$ the group of M\"obius and extended M\"obius transformations (the composition of a M\"o\-bius transformation with the complex conjugation) and by $\mathbb M$ its index
two subgroup of M\"obius transformations. The group $\widehat{\mathbb M}$ can also be viewed, by the Poincar\'e
extension theorem, as the group of hyperbolic isometries of the hyperbolic space ${\mathbb H}^{3}$; in this case,
${\mathbb M}$ is the group of orientation-preserving ones. M\"obius transformations are classified into {\it
parabolic}, {\it loxodromic} (including hyperbolic) and {\it elliptic}. Similarly, extended M\"obius transformations
are classified into {\it pseudo-parabolic} (the square is parabolic), {\it glide-reflections} (the square is
hyperbolic), {\it pseudo-elliptic} (the square is elliptic), {\it reflections} (of order two admitting a circle
of fixed points on $\widehat{\mathbb C}$) and {\it imaginary reflections} (of order two and having no fixed
points on $\widehat{\mathbb C}$). Each imaginary reflection has exactly one fixed point
in ${\mathbb H}^{3}$ and this point determines such reflection uniquely. If $K$ is a subgroup of $\widehat{\mathbb M}$ not contained in ${\mathbb M}$, then $K^{+}=K \cap {\mathbb M}$ is its {\it canonical orientation-preserving subgroup}. 
A  {\it Kleinian group} is a discrete subgroup of ${\mathbb M}$ and  an {\it extended Kleinian group} is a discrete subgroup of $\widehat{\mathbb M}$ necessarily containing extended M\"obius transformations. If $K$ is a (extended) Kleinian group, then its {\it region of discontinuity} is the subset $\Omega$ of $\widehat{\mathbb C}$ composed by the points on which it acts discontinuously. Note that $K$ is an extended
Kleinian groups if and only if $K^+$ is a Kleinian group; both of them with the same region of discontinuity.

\subsection{Klein-Maskit's combination theorems} 

\begin{theo}[Klein-Maskit's combination theorem \cite{Maskit:Comb, Maskit:Comb4}]\label{KMC}
\mbox{}
\\
(1) (Free products) Let $K_{j}$ be a (extended) Kleinian group with region of discontinuity $\Omega_{j}$, for $j=1,2$. Let ${\mathcal F}_{j}$ be a fundamental domain for $K_{j}$ and assume that there is simple closed loop $\Sigma$, contained in the interior of  ${\mathcal F}_{1} \cap {\mathcal F}_{2}$, bounding two discs $\Delta_{1}$ and $\Delta_{2}$, so that, for $j \in \{1,2\}$, $\Sigma \cup \Delta_{j} \subset  \Omega_{3-j}$ is precisely invariant under the identity in $K_{3-j}$. Then (i) $K = \langle K_1, K_2\rangle$ is a (extended) Kleinian group with fundamental domain ${\mathcal F}_{1} \cap {\mathcal F}_{2}$ and $K$ is the free product of $K_{1}$ and $K_{2}$ (ii) every finite order element in $K$ is conjugated in $K$ to a finite order element of either $K_{1}$ or $K_{2}$ and (iii) if both $K_{1}$ and $K_{2}$ are geometrically finite, then $K$ is so.

\smallskip
\noindent
(2) (HNN-extensions) Let $K_{0}$ be a (extended) Kleinian group with region of discontinuity $\Omega$, and let ${\mathcal F}$ be a fundamental domain for $K_{0}$. Assume that there are two pairwise disjoint simple closed loops $\Sigma_{1}$ and $\Sigma_{2}$, both of them contained in the interior of  ${\mathcal F}_{0}$, so that $\Sigma_{j}$ bounds a disc $\Delta_{j}$ such that $(\Sigma_{1} \cup \Delta_{1}) \cap (\Sigma_{2} \cup \Delta_{2})=\emptyset$ and that $\Sigma_{j} \cup \Delta_{j} \subset  \Omega$ is precisely invariant under the identity in $K_{0}$. Let $T$ be either a loxodromic transformation or a glide-reflection so that $T(\Sigma_{1})=\Sigma_{2}$ and $T(\Delta_{1}) \cap \Delta_{2}=\emptyset$. Then (i) $K = \langle K_{0}, f \rangle$ is a (extended) Kleinian group with fundamental domain ${\mathcal F}_{1} \cap (\Delta_{1} \cup \Delta_{2})^{c}$ and $K$ is the HNN-extension of $K_{0}$ by the cyclic group $\langle T \rangle$, (ii) every finite order element of $K$ is conjugated in $K$ to a finite order element of $K_{0}$ and (iii) if $K_{0}$ is geometrically finite, then $K$ is so.
\end{theo}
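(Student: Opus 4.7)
The plan is to establish both statements using the classical ping-pong technique in $\widehat{\mathbb C}$, combined with Poincar\'e's fundamental polyhedron theorem in ${\mathbb H}^3$ to secure discreteness, the fundamental set, the algebraic structure, and geometric finiteness in one package. The geometric engine is the precise invariance hypothesis, which converts the purely combinatorial normal-form argument for a free product, or for an HNN extension, into a concrete nesting of image discs under successive application of the generators.

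For part (1), I would take an arbitrary non-trivial reduced word $w = g_n g_{n-1}\cdots g_1$ whose letters alternate between $K_1\setminus\{\mathrm{id}\}$ and $K_2\setminus\{\mathrm{id}\}$ and track the image of $\Sigma\cup D_j$ for the appropriate $j$. The precise invariance of $\Sigma \cup D_j$ under the identity in $K_{3-j}$ says that every non-identity element of $K_{3-j}$ sends $\Sigma \cup D_j$ entirely off itself; since the complement of $\Sigma\cup D_j$ in $\widehat{\mathbb C}$ is $D_{3-j}$, its image must sit inside $D_{3-j}$. Iterating along the word produces a strictly descending nested sequence of closed discs, so $w(\Sigma)\neq \Sigma$ and in particular $w\neq \mathrm{id}$. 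This simultaneously shows that $K = \langle K_1, K_2\rangle$ is discrete and that the natural surjection $K_1 * K_2 \to K$ is an isomorphism. Applying the same nesting to an interior point of $\mathcal{F}_1\cap\mathcal{F}_2$ shows that its $K$-orbit meets the intersection only at the starting point, so $\mathcal{F}_1\cap\mathcal{F}_2$ is a fundamental domain.

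Part (2) runs on the same engine with the alphabet augmented by $T^{\pm 1}$: the element $T$ swaps $D_1$ with $D_2$, and the disjointness hypothesis $T(D_1)\cap D_2 = \emptyset$ keeps the nesting strict even when successive $K_0$-syllables happen to be empty, yielding both discreteness and the HNN normal form. For the classification of torsion, I rely on the standard fact that in a free product, or in an HNN extension whose stable letter is torsion-free (loxodromic or a glide-reflection), every finite-order element is conjugate into a factor; equivalently, any cyclically reduced word of length $\geq 2$ has infinite order, which is immediate from the ping-pong nesting applied to conjugates. Geometric finiteness is obtained by realising the planar fundamental set as the boundary-at-infinity face of a convex hyperbolic polyhedron in ${\mathbb H}^3$ cut out by the hyperbolic hemispheres based on $\Sigma$, or on $\Sigma_1$ and $\Sigma_2$, and then invoking Poincar\'e's theorem. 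The main obstacle I anticipate is verifying the edge-cycle and accidental-parabolic conditions of Poincar\'e's theorem: in particular, for (2), one must ensure that the new cycles produced by the stable letter $T$ close up only to the HNN relation and introduce no unintended elliptic identifications that would spoil either the group-theoretic normal form or geometric finiteness.
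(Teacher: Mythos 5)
The first thing to say is that the paper does not prove this statement: Theorem \ref{KMC} is quoted as background, with the proof delegated entirely to Maskit's papers \cite{Maskit:Comb, Maskit:Comb4} (see also \cite{Maskit:book}), so there is no internal argument to compare yours against. Your sketch does follow, in outline, the classical route those sources take for the algebraic content: the precise-invariance hypotheses set up a ping-pong between $\overline{D_1}$ and $\overline{D_2}$ (respectively between the paired discs and the stable letter $T$), the nesting of image discs shows every reduced word is non-trivial, and this gives the isomorphism of $K$ with the free product (respectively the HNN extension) together with the standard conjugacy classification of torsion in such groups. As a reconstruction of the skeleton of the classical proof, that part is sound.

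Two places where the plan is genuinely thinner than what the theorem requires. First, for the fundamental-domain claim you only verify the packing half (no two interior points of ${\mathcal F}_1\cap{\mathcal F}_2$ are $K$-equivalent); the covering half --- that every point of the region of discontinuity of $K$ has an orbit representative in $\overline{{\mathcal F}_1\cap{\mathcal F}_2}$ --- is the harder half, is proved by Maskit via an induction that pushes an arbitrary point into the fundamental set using the disc decomposition, and it is this, not the bare algebraic ping-pong, that yields discontinuity (hence discreteness): the nesting alone does not rule out a sequence of distinct elements tending to the identity, because the image discs accumulate on $\Sigma$ itself. Second, geometric finiteness is not obtained in the sources by Poincar\'e's polyhedron theorem; for general geometrically finite factors, possibly with parabolics and cusps, one cannot simply intersect fundamental polyhedra and check edge and cusp cycles, and \cite{Maskit:Comb4} proves part (iii) by analyzing the limit set of $K$ (conical and bounded parabolic limit points) in terms of those of the factors. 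You correctly flag the cycle conditions as the obstacle, but discharging them in general amounts to a different proof rather than a routine verification. In the special setting of this paper --- finite extensions of Schottky groups, hence no parabolics --- your Poincar\'e route would go through, but the theorem as stated is the general one.
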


\subsection{Kleinian $3$-manifolds and their automorphisms}
If $K$ is a Kleinian group and $\Omega$ is its region of discontinuity, then assocated to $K$ is a
$3$-dimensional orientable orbifold $M_{K}=({\mathbb H}^{3} \cup \Omega)/K$; its interior $M^{0}_{K}={\mathbb H}^{3}/K$ has a hyperbolic structure and its conformal boundary $S_{K}=\Omega/K$ has a natural conformal
structure. If $K$ is torsion free, then $M_{K}$ and $M^{0}_{K}$ are orientable $3$-manifolds and $S_{K}$ is a
Riemann surface; we say that $M_{K}$ is a {\it Kleinian $3$-manifold} and that $M_{K}$ and $S_{K}$ are {\it uniformized by $K$}. 
Now, if $K$ is an extended Kleinian group, then the $3$-orbifold $M_{K^{+}}$ admits
the  orientation-reversing homeomorphism $\tau:M_{K^{+}} \to M_{K^{+}}$ of order two induced by $K- K^{+}$ and
$M_{K^{+}}/\langle \tau\rangle=({\mathbb H}^{3} \cup \Omega)/K$. 

Let $\Gamma$ be a torsion free Kleinian group, so $M_{\Gamma}=({\mathbb H}^{3} \cup \Omega)/\Gamma$ is a Kleinian $3$-manifold.
 An {\it automorphism} of $M_{\Gamma}$ is a self-homeomorphism whose restriction to its interior $M_{\Gamma}^{0}$ is a hyperbolic
isometry. An orientation-preserving automorphism is called a {\it conformal automorphim} and an
orientation-reversing one an {\it anticonformal automorphism}. A {\it symmetry} of $M_{\Gamma}$ is an anticonformal
involution. We denote by ${\rm Aut }(M_{\Gamma})$ the group of automorphisms of $M_{\Gamma}$ and by ${\rm Aut}^{+}(M_{\Gamma})$ the subgroup
of conformal automorphisms. Let $\pi^{0}:{\mathbb H}^{3} \to M_{\Gamma}^{0}$ be the  universal covering induced by $\Gamma$. Clearly, $\pi^{0}$
extends to a universal covering $\pi:{\mathbb H}^{3} \cup \Omega \to M_{\Gamma}$ with $\Gamma$ as the group of Deck
transformations. If $H \subset {\rm Aut }(M_{\Gamma})$ is a finite group and we lift it to the universal covering  space
${\mathbb H}^{3}$ under $\pi^{0}$, then we obtain an (extended) Kleinian group $K$ containing $\Gamma$
as a normal subgroup of finite index such that $H=K/\Gamma$. The group $H$ contains orientation-reversing automorphisms  if and only if
$K$ is an extended Kleinian group.

\subsection{Schottky groups}
The {\it Schottky group of rank $0$} is just the trivial group. A {\it Schottky
group of rank $g \geq 1$} is a Kleinian group $\Gamma$ generated by loxodromic
transformations $A_1,\ldots,A_g$, so that there are $2g$ disjoint simple loops,
$C_1,C'_1,\ldots,C_g,C'_g$, with a $2g$-connected outside ${\mathcal D}\subset \widehat{\mathbb C}$, where
$A_i(C_i)=C'_i$, and $A_i({\mathcal D})\cap {\mathcal D}=\emptyset$, for $i=1,\ldots,g$.
The region of discontinuity $\Omega$ of $\Gamma$ is known to be  connected and dense
in $\widehat{\mathbb C}$, $S_{\Gamma}$ is a closed Riemann surface of genus $g$ and $M_{\Gamma}$ is a handlebody of genus $g$. In this case, 
$M_{\Gamma}^{0}$  carries a geometrically finite complete hyperbolic Riemannian metric with injectivity radius bounded away from zero. 
If $g \geq 2$,  then ${\rm Aut }(M_{\Gamma})$ has order at most $24(g-1)$ and ${\rm Aut }^{+}(M_{\Gamma})$ has
order at most $12(g-1) $ \cite{Z1,Z2}. Each conformal (respectively anticonformal) automorphism of $M_{\Gamma}$
induces a conformal (respectively anticonformal) automorphism of the conformal boundary $S_{\Gamma}$ and the
later determines the former due to the Poincare extension theorem.
As a consequence of Koebe's retrosection theorem \cite{Bers, Koebe}, every closed
Riemann surface is isomorphic to $S_{\Gamma}$ for a suitable Schottky group $\Gamma$.

A Schottky group of rank $g$ can be defined as a purely loxodromic Kleinian group of the second kind which is isomorphic to a free of rank $g$ \cite{Maskit:Schottky groups} and it can also be defined as a purely loxodromic geometrically finite Kleinian group which is isomorphic to a free of rank $g$ (essentially a consequence of the fact that a free group cannot be the fundamental group of a closed hyperbolic  $3$-manifold). It follows that every Kleinian structure on a handlebody is provided by a Schottky group, we called it a {\it Schottky structure}. The geometrically finite hyperbolic structures on the interior of a handlebody, with injectivity radius  bounded away from zero,  are provided by Schottky groups.

Let $M$ be a topological handlebody of genus $g$ and let $H$ be a finite group of homeomorphisms of $M$. It is known that there are: (i) a (extended) Kleinian group $K$, containing as a finite index normal subgroup a Schottky group $\Gamma$ of rank $g$, and (ii)  
an orientation-preserving homeomorphism $f:M \to M_{\Gamma}$, with $f H f^{-1} = K/\Gamma$. This is consequence of the fact that a handlebody is a compression body (see also \cite{Z2}).

\subsection{Extended Schottky groups}\label{Sec:ExtendedSchottky}
An {\it extended Schottky group of rank $g$} is an extended Kleinian group whose orientation-preserving half
is a Schottky group of rank $g$.  In the case that it does not contains reflections ({\it Klein-Schottky groups}) a  geometrical structure description was provided in \cite{H-M:imaginary}.  A geometric  structural description of all extended Schottky groups, in terms of the Klein-Maskit combination thorems, is as follows.

\begin{theo}[\cite{H-G:ExtendedSchottky}]\label{maintheo}
\mbox{}
An extended Schottky group is the free product $($in
the Klein-Maskit combination theorem sense$)$ of the following kind of groups:
\begin{itemize}
\item[(i)] cyclic groups generated by reflections,
\item[(ii)] cyclic groups
generated by imaginary reflections,
\item[(iii)] cyclic groups generated by
glide-reflections,
\item[(iv)] cyclic groups generated by loxodromic
transformations, and
\item[(v)] real Schottky groups $($that is groups generated
by a reflection and a Schottky group keeping invariant the corresponding circle of
its fixed points$)$.
\end{itemize}

Conversely,  a subgroup of $\widehat{\mathbb M}$ constructed using $\alpha$ groups of type
$({\rm  i})$, $\beta$ groups of type $({\rm ii})$, $\gamma$ groups of type $({\rm
iii})$, $\delta$ groups of type $({\rm iv} )$  and $\varepsilon$ groups of
type $({\rm v})$, is an extended Schottky group if and only if
$\alpha + \beta + \gamma + \varepsilon>0$.
If, in addition, the $\epsilon$ real Schottky groups above have
the ranks $r_{1},\ldots, r_{\varepsilon} \geq 1$, then it has rank
$g=\alpha + \beta +2(\gamma + \delta) + \varepsilon -1  +r_1 + \ldots + r_\varepsilon.$
\end{theo}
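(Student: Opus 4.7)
The plan is to prove the two implications separately and then derive the rank formula by an Euler-characteristic computation. For the converse direction I would proceed by induction on $n = \alpha + \beta + \gamma + \delta + \varepsilon$, combining one factor at a time via the free-product part of the Klein-Maskit combination theorem (Theorem \ref{KMC}(1)). At each step the combined group is a (extended) Kleinian group and, by part (ii) of that theorem, every finite-order element is conjugate into one of the factors; since in every factor type (i)--(v) the only torsion elements are orientation-reversing (reflections, imaginary reflections, or the reflection from a type-(v) summand), the orientation-preserving subgroup $\Gamma^{+}$ inherits no torsion. Being a subgroup of the free product of the free groups $H_{j}^{+}$, it is itself free by the Kurosh subgroup theorem, hence a purely loxodromic free Kleinian group, i.e., a Schottky group by Maskit's characterization \cite{Maskit:Schottky groups}. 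The hypothesis $\alpha + \beta + \gamma + \varepsilon > 0$ is exactly what guarantees the presence of an orientation-reversing generator, so $\Gamma \not\subset \mathbb{M}$ and the combined group is genuinely an extended Schottky group.

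To obtain the rank formula I would use direct Euler-characteristic bookkeeping. Each factor contributes $\chi(H) = 1/2$ for types (i) and (ii), $\chi(H) = 0$ for types (iii) and (iv), and $\chi(H) = (1-r_{i})/2$ for a type-(v) factor (the latter being isomorphic to $\mathbb{Z}_{2} \times F_{r_{i}}$ since the reflection commutes with any loxodromic whose axis lies on its fixed circle). Applying the free-product formula $\chi(G_{1} * \cdots * G_{n}) = \sum_{i} \chi(G_{i}) - (n-1)$, the Schreier multiplicativity $\chi(\Gamma^{+}) = 2\chi(\Gamma)$, and the identity $\chi(F_{g}) = 1 - g$, a short calculation yields $g = \alpha + \beta + 2(\gamma + \delta) + \varepsilon - 1 + r_{1} + \cdots + r_{\varepsilon}$.

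The forward direction is the hard part. Given an extended Schottky group $\Gamma$ with $\Gamma^{+}$ a Schottky group of rank $g$, I would build a fundamental polyhedron for $\Gamma$ in $\mathbb{H}^{3} \cup \Omega$ whose side pairings partition into the five listed classes. The idea is to start from a Schottky marking of $\Gamma^{+}$ by $2g$ disjoint loops $C_{1}, C_{1}', \ldots, C_{g}, C_{g}'$ in $\Omega$ and then upgrade this marking to one compatible with the $\mathbb{Z}_{2}$-action induced by $\Gamma / \Gamma^{+}$. Each orbit of loops under the combined action reveals one of the five factor types according to the geometric character of the orientation-reversing element pairing those sides: a circle of fixed points on $\widehat{\mathbb{C}}$ yields type (i); an isolated $\mathbb{H}^{3}$-fixed point yields type (ii); a glide-reflection pairing yields type (iii); a loxodromic pairing with no additional orientation-reversing symmetry yields type (iv); and a family of loops preserved by a common reflection yields type (v). The main obstacle is proving that such a compatible marking exists at all; this is where the geometrical results of \cite{Ka-Mc}, combined with a case analysis of the possible $\mathbb{Z}_{2}$-actions on a Schottky loop system, provide the crucial input. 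Once that structural fact is in place, splitting the polyhedron along the invariant loops and identifying each piece with one of the five factor types produces the Klein-Maskit decomposition directly, completing the proof.
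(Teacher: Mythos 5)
Note first that this paper does not prove Theorem~\ref{maintheo}: it is quoted verbatim from \cite{H-G:ExtendedSchottky}, and the only argument of this kind actually carried out here is the parallel one for the dihedral case (Theorem~\ref{const0}, Section~5). Measured against that source and against Section~5, your proposal follows essentially the same strategy, and I see no wrong step. For the converse direction your Klein--Maskit assembly is the standard one; the only caution is that ``purely loxodromic'' requires the clause of the combination theorems asserting that every \emph{parabolic} (not just every finite-order element) is conjugate into a factor --- the version quoted as Theorem~\ref{KMC} in this paper only states the finite-order clause, so you should cite the full statement from \cite{Maskit:Comb, Maskit:Comb4} before invoking \cite{Maskit:Schottky groups}. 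Your derivation of the rank formula is the one place you genuinely diverge: you use rational Euler characteristics ($\chi(\mathbb{Z}_2\times F_{r_i})=(1-r_i)/2$, the free-product formula, and multiplicativity over the index-$2$ inclusion), whereas the authors obtain the analogous formula in Theorem~\ref{const0}(3)(c) from Riemann--Hurwitz applied to the quotient orbifolds; the two computations are equivalent and yours is arguably cleaner, though it presupposes the identification of a type-(v) factor with $\mathbb{Z}_2\times F_{r_i}$, which you correctly justify. For the forward direction your outline is the right one --- an invariant defining loop system obtained from the Equivariant Loop Theorem \cite{M-Y} (Theorem~\ref{lifting}), followed by a classification of the stabilizers of structure loops and structure regions --- but be aware that this classification is not a formality: it is exactly the content that occupies Propositions~\ref{existence} through \ref{loopsestab} in the dihedral setting, including the minimality argument needed to rule out superfluous loops, so your sketch defers rather than dispatches the main body of the proof.
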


\begin{coro}\label{fijos de simetria}
Let $K$ be an extended Schottky group constructed, as in Theorem \ref{maintheo}, using $\alpha$
groups of type $({\rm  i})$, $\beta $ groups of type $({\rm ii})$, $\gamma$ groups of type $({\rm iii})$, $\delta
$ groups of type $({\rm iv} )$  and $\varepsilon$ groups of type $({\rm v})$. If $\Gamma$ is its
orientation-preserving half, then $K$ induces a  symmetry of $M_{\Gamma}$ whose connected components of fixed points consist of 
$\alpha$  two dimensional closed discs, $\beta $ isolated points,  and $\varepsilon$ two dimensional non-simply connected compact surfaces.  
In particular, If $\tau$ is a symmetry of a Kleinian manifold homeomorphic to a handlebody of genus $g$, and $n_{0}$
is  the number of isolated fixed points of $\tau$, $n_{1}$ is the number of total ovals in the conformal boundary
and $n_{2}$ is  the number of two-dimensional connected components of the set of fixed points of $\tau$, then
$n_{0}+n_{1}, n_{0}+n_{2} \in \{0,1,\ldots, g+1\}.$
\end{coro}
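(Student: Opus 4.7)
The plan is to read everything off Theorem \ref{maintheo} together with statement $(1)(ii)$ of the Klein-Maskit combination theorem, which says that every finite order element of an iterated combination is conjugate, inside the combination, to a finite order element living in a single factor. Applied to $\widehat{\Gamma}\setminus\Gamma$, this means that every involution contributing a point to ${\rm Fix}(\tau)$ on $M$ is a $\widehat{\Gamma}$-conjugate of an involution sitting inside one of the factor groups of types (i)--(v). Since types (iii) (glide-reflections) and (iv) (loxodromic) contain no non-trivial torsion, only the factors of types (i), (ii), and (v) can contribute.

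Next I would read off the local picture for each surviving factor type. A type (i) cyclic reflection group $\langle\sigma\rangle$ has fixed set in $\mathbb{H}^3\cup\widehat{\mathbb C}$ equal to a hyperbolic half-plane together with its circle at infinity; because the reflection sits inside a $\widehat{\Gamma}$-fundamental domain, this closed disc projects bijectively onto a closed $2$-disc component of ${\rm Fix}(\tau)$. A type (ii) imaginary reflection fixes exactly one point of $\mathbb{H}^3$ and none of $\widehat{\mathbb C}$, giving an isolated interior fixed point. For a type (v) real Schottky factor $\langle\sigma_j,\Gamma_j\rangle$, the free group $\Gamma_j$ of rank $r_j$ preserves the reflection plane $H_j$ of $\sigma_j$, hence $\sigma_j$ commutes with $\Gamma_j$ and the closure of $H_j$ is $\Gamma_j$-invariant; its quotient by the free action of $\Gamma_j$ is the corresponding component of ${\rm Fix}(\tau)$, a compact bordered surface which fails to be simply connected because $\Gamma_j$ is non-trivial. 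Collecting the contributions yields exactly $\alpha$ discs, $\beta$ isolated points, and $\varepsilon$ non-simply connected surface components, as asserted.

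For the ``in particular'' part one reads off $n_0=\beta$ and $n_2=\alpha+\varepsilon$, while the ovals on $S=\Omega/\Gamma$ are counted as one per type (i) disc and $r_j+1$ per type (v) component (the latter because $\Gamma_j$ acts on the invariant circle of $\sigma_j$ as a rank-$r_j$ Fuchsian group of the second kind, whose quotient on the ordinary set is $r_j+1$ loops), giving $n_1=\alpha+\sum_{j=1}^{\varepsilon}(r_j+1)$. Substituting the genus formula $g=\alpha+\beta+2(\gamma+\delta)+\varepsilon-1+r_1+\cdots+r_\varepsilon$ of Theorem \ref{maintheo} yields
\[
n_0+n_1=g+1-2(\gamma+\delta),\qquad n_0+n_2=g+1-2(\gamma+\delta)-\sum_{j=1}^{\varepsilon}r_j,
\]
both of which lie in $\{0,1,\ldots,g+1\}$.

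The step I expect to be most delicate is the one for type (v): one has to verify that the fixed discs of all $\widehat{\Gamma}$-conjugates of $\sigma_j$ that land in this component assemble into one connected, non-simply connected, compact surface downstairs, and that this surface's boundary decomposes into exactly $r_j+1$ pairwise disjoint ovals on $S$. Both facts rely on the precise-invariance hypotheses of the combination theorem, which ensure that the reflection mirrors in distinct factors are cleanly separated while the $\Gamma_j$-translates of $H_j$ within a single real Schottky factor coincide, so that the bordered surface is obtained as a genuine free quotient $H_j/\Gamma_j$ rather than as a disjoint union of pieces.
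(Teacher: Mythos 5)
Your approach is the same as the paper's: the paper offers no written proof, declaring the corollary a direct consequence of Theorem \ref{maintheo}, and your argument is exactly the expected unpacking of that theorem (torsion is conjugate into the factors by Klein--Maskit, glide-reflection and loxodromic factors are torsion-free, and the three remaining factor types contribute a disc, an isolated point, and a non-simply connected bordered surface respectively). The identification of the component count and of $n_{0}+n_{2}=\alpha+\beta+\varepsilon\leq g+1$ is correct.

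One step of your writeup is wrong as stated, although it does not sink the conclusion: the claim that a type (v) factor of rank $r_{j}$ contributes exactly $r_{j}+1$ ovals to the conformal boundary. The quotient $\bigl(\overline{H_{j}}\cap(\mathbb{H}^{3}\cup\Omega)\bigr)/\Gamma_{j}$ is a compact bordered surface with Euler characteristic $1-r_{j}$, but nothing forces it to be planar (or even orientable): a rank-$2$ real Schottky group can uniformize a one-holed torus, giving a single boundary oval rather than three, and in general the number of boundary circles is $r_{j}+1-2h_{j}$ (orientable genus $h_{j}$) or $r_{j}+1-k_{j}$ (non-orientable with $k_{j}$ crosscaps). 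So $n_{1}\leq\alpha+\sum_{j}(r_{j}+1)$ and your displayed formula $n_{0}+n_{1}=g+1-2(\gamma+\delta)$ should be the inequality $n_{0}+n_{1}\leq g+1-2(\gamma+\delta)$. Since the corollary only asserts $n_{0}+n_{1}\in\{0,1,\ldots,g+1\}$, the weakened inequality still suffices, but the equality as written is false.
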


\subsection{A lifting criteria}
We recall a simple criterion for lifting loops which we will need in Section \ref{Sec:dihedral}. This is a direct consequence of the Equivariant Loop Theorem \cite{M-Y}, whose proof is  based on minimal surfaces, that is, surfaces that minimize locally the area. In \cite{Hidalgo:Auto} there is provided a proof whose arguments is proper to (planar) Kleinian groups. A {\it function group} is a pair $(K,\Delta)$, where $K$ is a finitely generated Kleinian group and $\Delta$ is a $K$-invariant connected component of its region of discontinuity.

\begin{theo}\cite{Hidalgo:Auto, M-Y}\label{lifting}
Let $(K,\Delta)$ be a torsion free function group such that $S=\Delta/K$ is a closed Riemann surface of genus $g \geq 2$. 
Let $P:\Delta \to S$ a regular covering with $K$ as its deck group. If
$H$ is a group of  automorphism of $S$, then it lifts to the above regular planar covering if and only if
there is a collection $\mathcal F$ of pairwise disjoint simple loops on $S$ such that:
\begin{itemize}
\item[(i)] $\mathcal F$ defines the regular planar covering $P:\Delta \to S$; and
\item[(ii)] $\mathcal F$ is invariant under the action of $H$.
\end{itemize}
\end{theo}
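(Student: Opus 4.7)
The plan is to prove the two implications separately, since they are of very different flavor. The ``if'' direction is a direct application of the classical covering-space lifting criterion together with the definition of ``defining collection''. The ``only if'' direction is where the Equivariant Loop Theorem of Meeks--Yau quoted in the statement does the real work, allowing us to build an $H$-invariant defining system from the hypothesis that $H$ lifts.

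For sufficiency, I would assume that an $H$-invariant collection $\mathcal F$ as in (i) and (ii) exists. By the meaning of ``$\mathcal F$ defines $P$'' in the sense of Maskit's planarity theory for function groups, the kernel $N=\ker(P_{\ast}:\pi_{1}(S)\to G)$ is the smallest normal subgroup of $\pi_{1}(S)$ containing the free homotopy classes of the loops of $\mathcal F$. Since any $h\in H$ permutes the loops of $\mathcal F$ setwise, the induced automorphism $h_{\ast}$ of $\pi_{1}(S)$ (well defined up to inner automorphism once a basepoint path is chosen) permutes these free homotopy classes and hence preserves $N$. The standard covering-space lifting criterion then supplies, for every $h \in H$, a lift to a self-homeomorphism of $\Delta$ normalizing $G$; choosing such lifts on a generating set produces a global lift of $H$.

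For necessity, let $\widetilde{H}$ be any chosen lift of $H$ to $\Delta$, so that $\langle G,\widetilde{H}\rangle$ acts on $\Delta$ containing $G$ with finite index. By the Poincar\'e extension, this action extends to the $3$-manifold $M_{G}=({\mathbb H}^{3}\cup\Delta)/G$, whose conformal boundary is $S$. Planarity of $\Delta$ is equivalent to saying that $\pi_{1}(S)\to\pi_{1}(M_{G})$ has nontrivial kernel, and, by Maskit's planarity theorem, the covering $P$ is recovered from any collection of disjoint simple loops on $S$ whose normal closure in $\pi_{1}(S)$ equals $N$. I would then apply the Equivariant Loop Theorem of Meeks--Yau iteratively: at each step it furnishes an essential properly embedded compressing disk in $M_{G}$ whose $H$-orbit consists of pairwise disjoint or coincident disks, so compression along this orbit can be carried out $H$-equivariantly. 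Iterating until no further essential compressing disks remain, the resulting $3$-manifold is a disjoint union of $3$-balls, and the union of the boundaries of the disks used throughout is an $H$-invariant collection $\mathcal F$ of disjoint simple loops on $S$ whose normal closure in $\pi_{1}(S)$ is precisely $N$, as required.

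The main obstacle will be the very last part of the previous paragraph: controlling the iteration of equivariant compression and identifying the final ``defect'' on $\pi_{1}(S)$. Concretely, one must check that after each equivariant compression the new $3$-manifold is one to which the Equivariant Loop Theorem still applies, that the process terminates in balls in finitely many steps, and that the cumulative system of boundary circles on the original $S$ is a collection of pairwise disjoint, non-parallel simple loops whose normal closure is the full kernel $N$ (and not some proper subgroup corresponding to an intermediate cover). As the excerpt indicates, one may bypass minimal-surface techniques altogether and follow the alternative argument of \cite{Hidalgo:Auto}, where the same conclusion is obtained by purely planar Kleinian group methods: start from an arbitrary defining collection coming from Maskit's structure theorem for planar covers, express $G$ as a Klein--Maskit combination along it, and then average the combination data under the action of $H$ to produce an $H$-invariant defining system.
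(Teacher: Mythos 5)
The paper does not prove Theorem \ref{lifting} at all: it is stated as a known result imported from \cite{Hidalgo:Auto} and \cite{M-Y}, with only the remark that it follows from the Equivariant Loop Theorem (or, alternatively, from planar Kleinian-group arguments). So there is no in-paper proof to measure your attempt against; the relevant question is whether your sketch would stand on its own. Your sufficiency direction is essentially correct and standard: once one grants that ``$\mathcal F$ defines $P$'' means that $\ker(P_{*}\colon\pi_1(S)\to G)$ is the normal closure of the classes of the loops in $\mathcal F$ (Maskit's planarity theorem, with no powers needed since $G$ is torsion free), $H$-invariance of $\mathcal F$ gives invariance of that kernel and each $h\in H$ lifts by the usual covering-space criterion.

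The necessity direction, however, is a plan rather than a proof, and you say so yourself: the ``main obstacle'' you identify (termination of the equivariant compression and the identification of the normal closure of the accumulated boundary circles with the full kernel $N$) is precisely the content of the theorem, and deferring it to \cite{M-Y} or \cite{Hidalgo:Auto} leaves the statement unproved. Two specific assertions in that paragraph are also wrong as written. First, ``planarity of $\Delta$ is equivalent to $\pi_1(S)\to\pi_1(M_G)$ having nontrivial kernel'' is false: $\Delta\subset\Ch$ is always planar, and the kernel is trivial exactly when $\Delta$ is simply connected (e.g.\ $G$ quasi-Fuchsian), which is a perfectly admissible planar covering with $\mathcal F=\emptyset$. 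Second, the equivariant compression of $M_G$ does not in general terminate in a disjoint union of $3$-balls; that happens only in the Schottky case. For a general function group the process stops when the boundary becomes incompressible, and one must then argue separately that the circles collected along the way normally generate all of $N$ rather than a proper subgroup corresponding to an intermediate cover. Since the paper itself treats this theorem as a citation, the honest options are either to do likewise or to reproduce one of the two cited proofs in full; the sketch as it stands does neither.
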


\subsection{A counting formula}
Let $K$ be an extended Kleinian group containing a Schottky group $\Gamma$ of rank $g$ as a normal
subgroup of finite index (the last trivially holds if  $g \geq 1$). Let  us
denote by $\theta:K \to H=K/\Gamma$,  the canonical projection. If $\tau \in H$ is an involution 
which is the $\theta$-image of an extended M\"obius transformation, then $\widehat{\Gamma} =\theta^{-1}(\langle
\tau \rangle)$ is an extended Schottky group whose orientation-preserving hals is $\Gamma$.  
By Theorem \ref{maintheo}, $\widehat{\Gamma}$ is constructed using  $\alpha$ reflections, $\beta$ imaginary reflections and $\varepsilon$ real Schottky groups. 
These values can be computed from $\Theta$.  Before, we provide some necessary definitions.

A {\it complete set of symmetries of } $K$ is a maximal collection
${\mathcal C}= \{c_{i} \in K: i \in I\}$ of anticonformal involutions (i.e. reflections and
imaginary reflections) which are non-conjugate in $K$( we shall refer to its elements as to canonical symmetries).  As $K$ is geometrically finite (as it is a finite extension of a Schottky group), ${\mathcal C}$ is finite. 

 For each $i \in I$ we set $I(i) \subset I$ defined by
those $j \in I$ so that $\theta(c_i)$ and $\theta(c_{j})$ are conjugate in $H$ (in particular, $i \in I(i)$).
Note that it may happen that for $j \in I(i)$, $c_{j}$ can be imaginary reflection even if $c_i$
is a reflection and  viceversa (this occurs when $\theta(c_i)$ is a symmetry of $M_{\Gamma}$ whose locus of fixed points has isolated
points and also two-dimensional components). We set by $J(i)$ the subset of $ I(i)$
defined by those $j$ for which  $c_j$ is an imaginary reflection.
We also set by $F(i) \subset I(i) \setminus J(i)$ for those $j$ for which $c_{j}$ has finite centralizer in
$K$ and  set $E(i)=I(i) \setminus (J(i)\cup F(i))$. As $\widehat{\Gamma}$ has finite index in
$K$, a reflection $c \in K$ has an infinite centralizer ${\rm C}( K,c)$  in
$K$ if and only if it has an infinite centralizer in $\widehat{\Gamma}$.

\begin{theo}[\cite{H-G:ExtendedSchottky}]\label{teo1}
Let $K$ be an extended Kleinian group containing a Schottky group $\Gamma$ as a finite index normal
subgroup. Let $\theta:K \to H=K/\Gamma$ be the canonical projection and 
${\mathcal C}= \{c_{i}: i \in I\}$ be a complete set of symmetries of  $K$. Then
$\Gamma_i=\theta^{-1}(\langle \theta(c_i) \rangle)=\langle \Gamma, c_i \rangle$ is an extended Schottky group,
constructed using  $\alpha$ reflections, $\beta$ imaginary reflections and $\varepsilon$ real Schottky groups as
in Theorem {\rm \ref{maintheo}}, where
$\alpha=\sum_{j \in F(i)}
[ {\rm C}(H, \theta(c_j)): \theta({\rm C}(K, c_j))]$,
$\beta=\sum_{j \in J(i)} [ {\rm C}(H, \theta(c_j)):
\theta({\rm C}(K, c_j))]$ and
$\varepsilon=\sum_{j \in E(i)}
[ {\rm C}(H, \theta(c_j)): \theta({\rm C}(K, c_j))]$.
\end{theo}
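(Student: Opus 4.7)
The plan is to analyze the extended Schottky group $\Gamma_i := \theta^{-1}(\langle \theta(c_i) \rangle) = \langle \Gamma, c_i \rangle$, an index two extension of $\Gamma$, via its Klein--Maskit decomposition from Theorem \ref{maintheo}, and to count the $\Gamma_i$-conjugacy classes of anticonformal involutions in $\Gamma_i$ in two different ways. On one hand, by part $(1)(ii)$ of Theorem \ref{KMC} (every torsion element of a free combination is conjugate into a factor), each of the $\alpha$ type $({\rm i})$, $\beta$ type $({\rm ii})$, and $\varepsilon$ type $({\rm v})$ factors contributes exactly one $\Gamma_i$-conjugacy class of such an involution; the three families are geometrically distinguishable as imaginary reflections (type $({\rm ii})$), reflections with finite centralizer (type $({\rm i})$), and reflections with infinite centralizer (type $({\rm v})$, whose real Schottky factor centralizes the reflection). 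On the other hand, since every such involution $c \in \Gamma_i$ satisfies $\theta(c) = \theta(c_i)$ (as $\Gamma$ is torsion free), the $\widehat{K}$-conjugacy classes that can appear in $\Gamma_i$ come exactly from the canonical symmetries $c_j$ with $j \in I(i)$, and it suffices to show each $c_j$ contributes $[{\rm C}(G, \theta(c_j)) : \theta({\rm C}(\widehat{K}, c_j))]$ classes to this count.

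For this key count, fix $j \in I(i)$ and, after replacing $c_j$ by a $\widehat{K}$-conjugate if necessary, assume $c_j \in \Gamma_i$ (possible because $\theta(c_j)$ is $G$-conjugate to $\theta(c_i)$). Then $g c_j g^{-1} \in \Gamma_i$ precisely when $\theta(g) \in {\rm C}(G, \theta(c_j))$, i.e.\ when $g \in M := \theta^{-1}({\rm C}(G, \theta(c_j)))$; and $g c_j g^{-1}$, $g' c_j g'^{-1}$ are $\Gamma_i$-conjugate if and only if $g' \in \Gamma_i \, g \, {\rm C}(\widehat{K}, c_j)$. Hence the number of $\Gamma_i$-conjugacy classes of elements in the $\widehat{K}$-conjugacy class of $c_j$ lying in $\Gamma_i$ equals $|\Gamma_i \backslash M / {\rm C}(\widehat{K}, c_j)|$. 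Using that $\Gamma$ is normal in $\widehat{K}$ and $\Gamma \subseteq \Gamma_i \subseteq M$, the projection $\theta$ induces a well-defined bijection of this set onto $\langle \theta(c_j) \rangle \backslash {\rm C}(G, \theta(c_j)) / \theta({\rm C}(\widehat{K}, c_j))$. Since $c_j$ centralizes itself, $\theta(c_j) \in \theta({\rm C}(\widehat{K}, c_j))$, and this double coset space collapses to ${\rm C}(G, \theta(c_j)) / \theta({\rm C}(\widehat{K}, c_j))$, of cardinality $[{\rm C}(G, \theta(c_j)) : \theta({\rm C}(\widehat{K}, c_j))]$.

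Finally one matches the three families of factors with the partition $I(i) = F(i) \sqcup J(i) \sqcup E(i)$: being a reflection versus an imaginary reflection is invariant under $\widehat{K}$-conjugation, and by the remark preceding the statement, having finite versus infinite centralizer in $\widehat{K}$ is likewise $\widehat{K}$-conjugation invariant (and equivalent to the corresponding condition in $\widehat{\Gamma}$). Summing the above index formula over $F(i)$, $J(i)$, and $E(i)$ produces $\alpha$, $\beta$, and $\varepsilon$ respectively.

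The main technical point is the bijectivity of the double coset map $\Gamma_i \backslash M / {\rm C}(\widehat{K}, c_j) \to \langle \theta(c_j)\rangle \backslash {\rm C}(G, \theta(c_j)) / \theta({\rm C}(\widehat{K}, c_j))$. Well-definedness and surjectivity are routine from the surjectivity of $\theta|_M$ onto ${\rm C}(G, \theta(c_j))$; injectivity requires that if $\theta(g') = \theta(c_j)^a \theta(g) \theta(h)$ for some $h \in {\rm C}(\widehat{K}, c_j)$ and $a \in \{0,1\}$, then one can lift and absorb the discrepancy, on the left by an element of $\Gamma \subseteq \Gamma_i$ and the factor $c_j^a \in \Gamma_i$, and on the right by $h$, into a common $(\Gamma_i, {\rm C}(\widehat{K}, c_j))$-double coset of $M$. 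This is where the hypotheses $c_j \in \Gamma_i$ (arranged by preliminary conjugation) and $\Gamma \triangleleft \widehat{K}$ do the real work.
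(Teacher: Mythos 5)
This theorem is quoted in the paper from \cite{H-G:ExtendedSchottky} and no proof of it is given here, so there is no in-paper argument to compare against; judged on its own, your proof is correct and is essentially the natural (and, as far as I can tell, the original) argument: count $\Gamma_i$-conjugacy classes of anticonformal involutions in $\Gamma_i$ once via the Klein--Maskit factors and once via the double cosets $\Gamma_i\backslash \theta^{-1}({\rm C}(G,\theta(c_j)))/{\rm C}(\widehat K,c_j)$. Both halves check out: the double-coset bijection and its collapse to $[{\rm C}(G,\theta(c_j)):\theta({\rm C}(\widehat K,c_j))]$ work exactly as you say, using $\theta(c_j)\in\theta({\rm C}(\widehat K,c_j))$ and the torsion-freeness of $\Gamma=\ker\theta$. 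Two small points you use implicitly and should at least name: (a) each factor of type (i), (ii) or (v) contains a \emph{unique} anticonformal involution (for type (v) because the real Schottky group is $\langle\sigma\rangle\times\Gamma_0$ with $\Gamma_0$ free, every element of $\Gamma_0$ commuting with $\sigma$ since it preserves its circle of fixed points, so $\sigma\gamma$ is an involution only for $\gamma=1$), and involutions lying in distinct free factors are never conjugate in the free product --- these standard normal-form facts are what make the first count exactly $\alpha+\beta+\varepsilon$; (b) the dichotomy finite/infinite centralizer separates types (i) and (v) only because Theorem \ref{maintheo} guarantees the real Schottky factors have rank $r_k\geq 1$, and the finite-index remark lets you pass between centralizers in $\Gamma_i$ and in $\widehat K$. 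With those two observations made explicit, the proof is complete.
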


\section{Structural picture of dihedral extended Schottky groups}\label{Sec:dihedral}
In this section we provide an structural picture of the dihedral extended Schottky groups, in terms of the Klein-Maskit combination theorems, extending the one obtained in \cite{H-M:imaginary}.  We first start with a simple criteria to determine when an extended Kleinian group is a dihedral extended Schottky group and then we describe some basic examples of these type of groups which will be the main actors in the structural description.

\subsection{A simple criteria}
Let $K$ be a dihedral extended Schottky group, say generated by the extended Schottky groups $K_{1}$ and $K_{2}$ with the same orientation-preserving half given by the Schottky group $\Gamma$. If $\Omega$ is the region of discontinuity of $K$ (which is the same for $K_{j}$ and $\Gamma$), then $K_{j}$ induces a symmetry $\tau_{j}$ on the Riemann surface $S=\Omega/\Gamma$. If $p>1$ is the order of $\tau_{1}\tau_{2}$, then 
there exists a surjective homomorphism $\Phi:K \to {\mathbb D}_{p}=\langle \tau_{1},\tau_{2}\rangle$ such that $K_{j}=\Phi^{-1}(\langle \tau_{j}\rangle)$, $\ker(\Phi)=\Gamma$ and $\Phi(K^{+})=\langle \tau_{1}\tau_{2} \rangle$. The converse is provided by the following.

\begin{prop}\label{necesario}
An extended Kleinian group $K$ is a dihedral extended Schottky
group if and only  there is a surjective homomorphism 
$\varphi  :K \to {\rm D}_{p}=\langle a,b: a^{2}=b^{2}=(ab)^{p}=1\rangle $ (for some positive integer $p>1$) whose
kernel is a Schottky group $\Gamma$ and $\varphi  (K^{+})=\langle ab\rangle$.
\end{prop}
\begin{proof}
One direction was already noted above. For the other, 
as $\varphi  (K^{+})=\langle ab\rangle$, then 
$K_{1}=\varphi  ^{-1}(\langle a \rangle)$ and $K_{2}=\varphi  ^{-1}(\langle b \rangle)$ are extended Kleinian groups. As  $K_{j}$  contains $\Gamma$ as an index two subgroup, its is an  
extended Schottky group (where $\Gamma$ is its index two preserving half). Since $K$ is generated by $K_{1}$ and $K_{2}$, it is a dihedral extended Schottky
group.
\end{proof}

\begin{rema}
If $K$ is a dihedral extended Schottky group, then its limit set is totally disconnected (as $K$ contains a Schottky group of finite index) and contains no parabolic transformations. As a consequence of the classification of funtion groups \cite{Maskit:function2,Maskit:function3,Maskit:function4}, every finitely generated Kleinian group, without parabolic nor elliptic elements and with totally disconnected limit set, is a Schottky group.
\end{rema}

\subsection{Examples: Basic dihedral extended Schottky groups}
\subsubsection{}
Let $K$ be constructed (see Figure \ref{Fig1}), by the Klein-Maskit combination theorem, as a free product of $\alpha$ cyclic groups generated by a reflection, $\beta$ cyclic groups generated by an imaginary reflections, $\gamma$ cyclic groups generated by a loxodromic transformation, $\delta$ cyclic groups generated by a glide-reflection, $\rho$ cyclic groups generated by an elliptic transformation of finite order and $\eta$ groups generated by an elliptic transformation of finite order and a loxodromic transformation commuting with the elliptic one and $\kappa$ groups generated by an elliptic transformation of finite order and a glide-reflection transformation both sharing their fixed points (so the glide reflection conjugates the elliptic into its inverse).

Let $\Omega$ be the region of discontinuity of $K$. Then (i) $\Omega/K$ is the connected sum of $\beta+2\gamma+2\delta+2\kappa+2\eta$ real projective planes (if $\beta+\delta+\kappa>0$) or a genus $\gamma+\eta$ orientable surface (if $\beta=\delta=\kappa=0$), 
 with exactly $\alpha$ boundary components and with $2\rho$ cone points in the interior (in pairs of the same cone order); and 
 (ii) $\Omega/K^{+}$ is a closed Riemann surface of genus $\widetilde{g}=\alpha+\beta+2(\gamma+\delta+\kappa+\eta)-1$ with $4\rho$ cone points (in cuadruples with the same cone order). 
As a consequence of Proposition \ref{necesario},  $K$ will be a dihedral extended Schottky group if and only if either (i) $\alpha+\beta+\delta+\kappa \geq 2$ or (ii) $\alpha+\beta+\delta+\kappa=1$ and $\gamma+\eta \geq 1$.  In this case, that proposition, one may take $p$ as the lest common multiple of the orders of the $\rho+\eta+\kappa$ elliptic generators. The $\gamma+\eta$ loxodromic transformations are send to the identity or powers of $ab$, the reflections, imaginary reflections and glide-reflections are sent to either $a$ or $b$ and the elliptic elements are sent to powers of $ab$ (preserving the order).

\begin{figure}[h]
\begin{center}

\includegraphics[width=8cm,keepaspectratio=true]{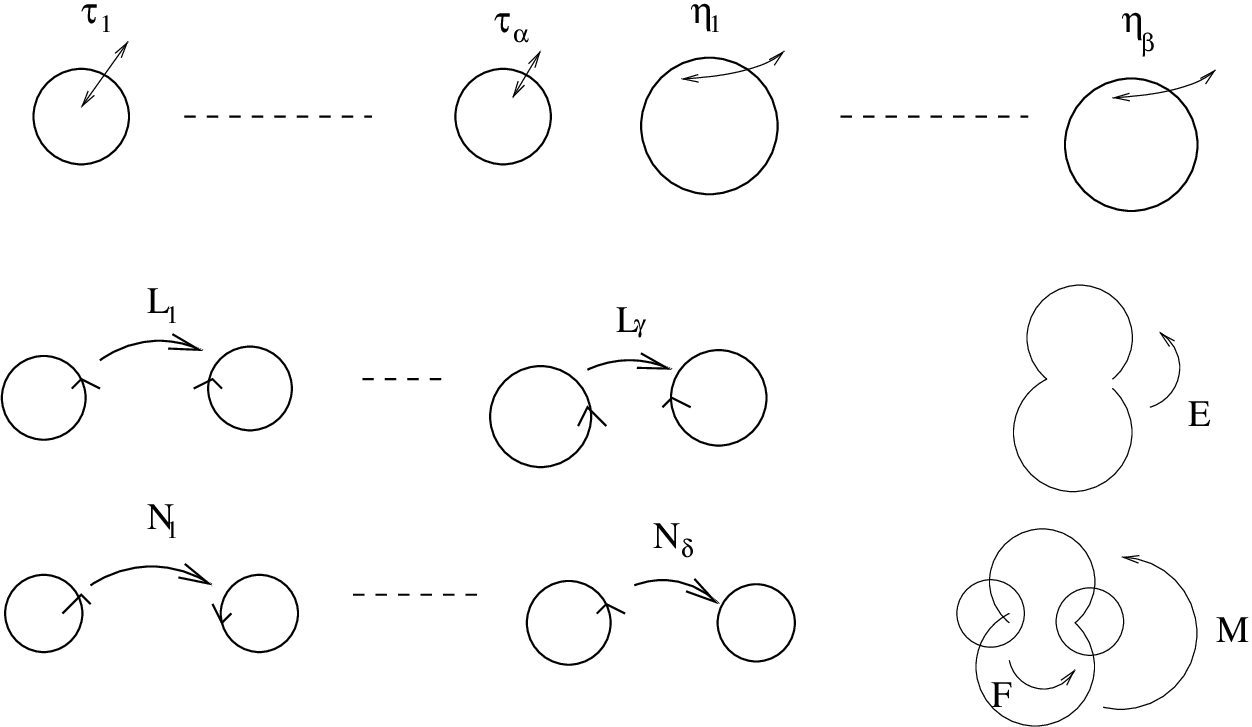}
\caption{Examples of dihedral extended Schottky groups: $\tau_{i}$ reflections, $\eta_{i}$ imaginary reflections, $L_{i}$ loxodromics, $N_{i}$ glide-reflections, $E,F$ elliptics, $M$ either loxodromic or glide-reflection.}
\label{Fig1}
\end{center}
\end{figure}

\subsubsection{Basic extended dihedral groups}\label{ejemploBEDG}
Let $C \subset \widehat{\mathbb C}$ be a circle and let $\sigma$ be the reflection on it. Consider a (finite) non-empty collection of parwise disjoint closed discs $\Delta_{j}$, each one with boundary circle $\Gamma_{j}$ being orthogonal to $C$. We denote by $Int(\Delta_{j})$ and $Ext(\Delta_{j})$ the interior and exterior, respectively, of $\Delta_{j}$.

\begin{enumerate}
\item Take $2r$ of these circles (it could be $r=0$), say $\Gamma_{1}, \cdots,\Gamma_{2r}$, and loxodromic transformation $L_{1},\ldots,L_{r}$ such that $L_{j}(\Sigma_{j})=\Sigma_{r+j}$, $L_{j}(Int(\Delta_{j})) \cap Int(\Delta_{j+r})=\emptyset$ and $L_{j}$ commuting with $\sigma$. (The group generated by $\sigma$ and $L_{1},\ldots,L_{r}$ is a real Schottky group).

\item Now, for some each others $\Gamma_{i}$, we consider an elliptic transformation $E_{i}$, with both fixed point on $C$, such that $E_{i}(Ext(\Delta_{i})) \subset Int(\Delta_{i})$ and  $\sigma E_{i} \sigma=E_{i}^{-1}$ (each $E_{i}\sigma$ is a reflection).

\item For others $\Gamma_{k}$, we consider an elliptic transformation $F_{k}$, with both fixed point on $C$, such that $F_{k}(Ext(\Delta_{k})) \subset Int(\Delta_{k})$ and a loxodromic transformation $M_{k}$, such that $M_{k} F_{k}=F_{k}M_{k}$, $M_{k}\sigma=\sigma M_{k}$ and $\sigma F_{k} \sigma=F_{k}^{-1}$ (both fixed points of $F_{k}$ are also the fixed points of $M_{k}$).

\item For others $\Gamma_{s}$ we take an elliptic transformation of order two $D_{s}$ (whose fixed points are not on $C$) keeping invariant $\Gamma_{s}$ (so permuting both discs bounded by it) and commuting with $\sigma$.

\item Finally, for each of  the rest of the circles $\Sigma_{l}$ we consider the reflection $\tau_{l}$ on it (so it commutes with $\sigma$).
\end{enumerate}

The group $K$ generated by $\sigma$ and all of the above transformations is an extended Kleinian group (by the Klein-Maskit combination theorem) called a basic extended dihedral group (see Figure \ref{Fig2}).

If $\Omega$ is the region of discontinuity of $K$, then $S=\Omega/K$ is a bordered (orbifold) Klein surface, which is orientable if and only if the loxodromic transformations $L_{k}$ keep invariant each of the two discs bounded by $C$. Each elliptic element $E_{i}$ produces two cone points on the border, both with cone order the order of $E_{i}$. Each $D_{s}$ produces a cone point of order two in the interior. Each reflection in (5) produces two cone points of order two in the boundary. In particular, $\Omega/K^{+}$ provides of a compact orientable orbifold with some even number of cone points admitting a symmetry permuting them.

As a consequence of Proposition \ref{necesario}, $K$ is a dihedral extended Schottky group.  In this case, we take $p$ as the least common multiple of the orders of all elliptic transformations $E_{i}$, $F_{k}$ and $D_{s}$. We send the loxodromic transformations $L_{j}$ and $M_{k}$ to the identity, the reflection $\sigma$ to $a$ and the reflections $\tau_{l}$ to $b$ (for instance).

\begin{figure}[h]
\begin{center}

\includegraphics[width=8cm,keepaspectratio=true]{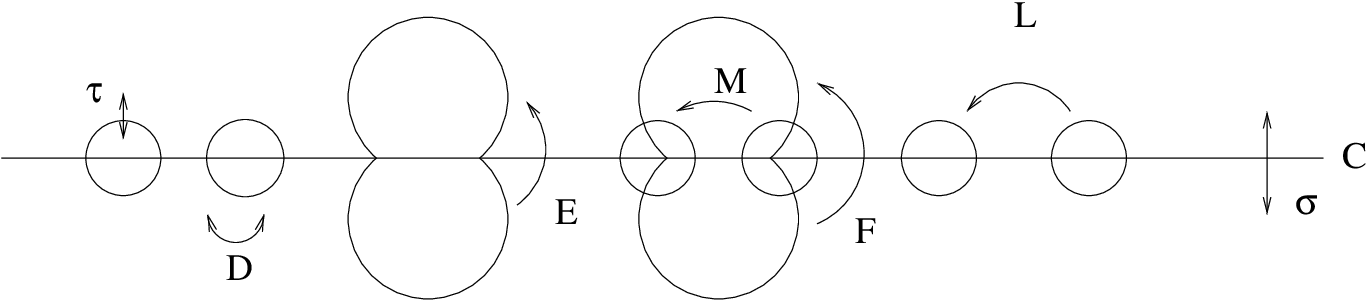}
\caption{Examples of a basic extended dihedral group.}
\label{Fig2}
\end{center}
\end{figure}

\subsection{The structural description}
\begin{theo}\label{const0}
\mbox{}
$(1)$ A dihedral extended Schottky group is the free product (in the sense of the Klein-Maskit combination theorems) of the following groups (see Figures \ref{Fig1} and \ref{Fig2})
\begin{itemize}
\item[(i)] $\alpha$ cyclic groups generated by reflections;
\item[(ii)] $\beta$ cyclic groups generated by imaginary reflections;
\item[(iii)] $\gamma$ cyclic groups generated by loxodromic transformations;
\item[(iv)] $\delta$ cyclic groups generated by glide-reflections;
\item[(v)] $\rho$ cyclic groups generated by an elliptic transformation of finite order;
\item[(vi)] $\eta$ groups generated by an elliptic transformation of finite order and a loxodromic transformation commuting with the elliptic one;
\item[(vii)] $\kappa$ groups generated by an elliptic transformation of finite order and a glide-reflection transformation both sharing their fixed points (so the glide reflection conjugates the elliptic into its inverse);
\item[(viii)] $\varepsilon$ basic extended dihedral groups $K_{1},\ldots,K_{\varepsilon}$;
\end{itemize}
such that $\alpha+\beta+\delta+\kappa+\varepsilon>0$.

\smallskip \noindent
$(2)$ An extended function group constructed as in $(1)$, is a dihedral extended Schottky
group if and only if either: (i) $\alpha+\beta +\delta +\kappa+\varepsilon \geq 2$ or (ii) $\alpha+\beta +\delta + \kappa+\varepsilon =1$ and $\gamma+\rho+\eta>0.$
\end{theo}

\begin{rema}\label{observa1}
(1) The condition $\alpha+\beta+\delta+\kappa+\varepsilon>0$ in part (1) of Theorem \ref{const0} is needed for the constructed group to have orientatation-reversing elements.

(2) Let $K$ be an extended Kleinian group, constructed as a free product (in the sense of the Klein-Maskit combination theorem) using $\alpha$ groups of type (i), $\beta $
groups of type (ii) $\gamma$ groups of type (iii), $\delta$ groups of type (iv), $\rho$ groups of type (say of orders $l_{1},\ldots,l_{\rho}$) (v), $\eta$ groups of type (vi), $\kappa$ groups of type (vii) 
and $\varepsilon$ basic extended dihedral groups $K_{1},\ldots,K_{\varepsilon}$. 
Assume that the corresponding orbifold $\Omega_{i}/K^{+}_{i}$ has genus $g_{i}$ and $2r_{i}$ cone points of orders $t_{i1},t_{1i},\ldots, t_{ir_{i}}, t_{ir_{i}}\geq 3$ and $2n_{i}$ cone points of order two, where $\Omega_{i}$ is the region of discontinuity of $K_{i}$.
If $|t|$ denotes the order of the transformation $t$, then
\begin{itemize}
\item[(a)] the region of discontinuity $\Omega$ of $K$ is connected and the orbifold $\Omega/K^{+}$ has genus
$\widetilde{g}=\alpha+\beta+2(\gamma+\delta+\kappa+\eta)+\varepsilon+g_{1}+\cdots+g_{\varepsilon} -1$,
has $4\rho$ conical points of orders $l_{1},l_{1},l_{1},l_{1},\ldots,l_{\rho},l_{\rho},l_{\rho},l_{\rho}$, 
$2(r_{1}+\cdots+r_{\varepsilon})$ conical points, of orders $|t_{11}|$, $|t_{11}|$, $|t_{12}|$,
$|t_{12}|$, \ldots, $|t_{\varepsilon r_{\varepsilon}}|$, $|t_{\varepsilon r_{\varepsilon}}|$ and 
$2(n_{1}+\cdots+n_{\varepsilon})$ conical points of order $2$; and

\item[(b)] if $K$ is a dihedral extended Schottky group containing  a Schottky group $\Gamma$ of rank $g$
as a normal subgroup such that $K/\Gamma \cong {\rm D}_{p}$, then (by the Riemann-Hurwitz formula)
$$g=p(\widetilde{g}-1)+1+2p\sum_{i=1}^{\rho}\left(1-\frac{1}{l_{i}}\right)+p\sum_{i=1}^{\varepsilon}\sum_{k=1}^{r_{i}}\left(1-\frac{1}{|t_{ik}|}\right)+
\frac{p}{2}\sum_{i=1}^{\varepsilon}n_{i}.$$
\end{itemize}

\end{rema}

\subsection{Proof of Theorem \ref{const0}}\label{Sec:teo2}
Part (2) follows from Proposition \ref{necesario}. We proceed to prove part (1).

\subsubsection{}
Let $K$ be a dihedral extended Schottky group, generated by two different extended Schottky groups
$G_{1}$ and $G_{2}$, such that $G_{1}^{+}=G_{2}^{+}=\Gamma$ (a Schottky group of rank $g$) and let $\widehat{G}^{+}$ be its index two
orientation-preserving half.  All the groups $\Gamma$, $G_1$, $G_2$, $K$ and
$K^{+}$ have the same region of discontinuity $\Omega$. Set $S^+=\Omega/\Gamma$, $S_1=\Omega/G_1$, $S_2=\Omega/G_2$, $S_{+}=\Omega/K^{+}$ and
$S=\Omega/K$. 

\begin{prop}  If 
$k \in K^{+}$ is an elliptic transformation with one fixed point in $\Omega$, then both fixed points of
$k$ belong to $\Omega$. 
\end{prop}
\begin{proof} 
As $[K^{+}:\Gamma]<\infty$ and $\Gamma$ contains no  parabolic elements, the same holds for $K^{+}$.
As $K^{+}$ is a geometrically finite function group, the result follows from  \cite{Hidalgo:MEFP}.
\end{proof}

As $K$ is a finite extension of $\Gamma$, and $S^+$ is a closed Riemann surface,
(i) $S$ is a compact not necessarily orientable orbifold, with a finite number of orbifold points and possible
non-empty boundary, and (ii) $S_{+}$ is a closed Riemann surface with some finite number of orbifold points. 

\subsubsection{}
Since $\Gamma$ has index 2 in $G_{i}$, for $i=1, 2$, if $\eta \in G_{i} \setminus \Gamma$, then 
$\eta_{i}^{2} \in G$ and either $\eta$ is: (i) a reflection or (ii) an imaginary reflection or (iii) glide-reflection. 
In particular, $G_{i}$ induces
a symmetry $\tau_{i}$ on $S^+$ with $S_{i}=S^+/\langle\tau_{i}\rangle$ and every orientation-reversing element of $K$ either acts without fixed
points on $\Omega$ or they are reflections.

The group $J=\langle \tau_{1}, \tau_{2}\rangle$ is isomorphic to the dihedral group ${\mathbb D}_{p}$, where $p \geq 2$ is the order of $\tau_{1}\tau_{2}$.
It follows that  $S_{+}=S^{+}/\langle \tau_{1}\tau_{2}\rangle$, $S=S^+/J$ and that every orientation-reversing element of $J$ is conjugate in $J$ to either $\tau_1$ or $\tau_2$.

On $S_{+}$ we have an anticonformal involution induced by $J$, preserving the finite set of orbifold points, so that quotient of
$S_{+}$ by it is $S$.

Let us denote by $\Phi:K\to J=\langle \tau_{1}, \tau_{2}\rangle$ the canonical surjective homomorphism with kernel $\Gamma$ which is defined by sending the elements of $G_{i}\setminus \Gamma$ to the symmetry $\tau_{i}$. It follows that $\Phi^{-1}(\langle \tau_{i}\rangle)=G_{i}$.

\begin{rema}
If $p$ is odd, then $\tau_{1}$ and $\tau_{2}$ are conjugated in $J$ and, in particular, $G_{1}$ and $G_{2}$
are conjugated in $K$.
\end{rema}

\begin{prop}\label{reversing}
Every orientation-reversing element of $K$ is either glide-reflection or an imaginary
reflection or a reflection.
\end{prop}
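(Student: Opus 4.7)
The plan is to go through the classification of orientation-reversing elements of $\widehat{\mathbb M}$ recalled in the preliminaries, namely pseudo-parabolic, glide-reflection (i.e.\ semi-hyperbolic), pseudo-elliptic, reflection and imaginary reflection, and to rule out the two types not listed in the statement. Since the type of an orientation-reversing $h\in\widehat{G}$ is completely determined by the nature of $h^{2}\in\widehat{G}^{+}$ (trivial, parabolic, loxodromic, or nontrivial elliptic), the task reduces to showing that for $h\in\widehat{G}\setminus\widehat{G}^{+}$ the square $h^{2}$ is never parabolic and never a nontrivial elliptic.

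First I would dispose of the pseudo-parabolic case in one line: by part (i) of the preceding proposition $\widehat{G}$ contains no parabolic elements, and in particular $h^{2}\in\widehat{G}^{+}\subset\widehat{G}$ cannot be parabolic. This instantly excludes pseudo-parabolics.

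The substantive step is to rule out pseudo-elliptics, and for this I would exploit the dihedral structure of the quotient. By construction $J=\widehat{G}/G$ is the dihedral group of order $2p$, whose orientation-reversing elements are precisely the $p$ ``reflections'' of the dihedral presentation, each of which is an involution. Projecting $h$ to $J$, the coset $\bar{h}$ is orientation-reversing, hence $\bar{h}^{2}=1$ in $J$, which means $h^{2}\in G$. But $G$ is a Schottky group and is therefore torsion-free, so $h^{2}$ cannot be a nontrivial elliptic transformation. This contradicts the pseudo-elliptic hypothesis and completes the argument.

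The only real (and very modest) obstacle is recognising that one should project to $J$ rather than argue directly inside $\widehat{G}$: once one observes that every orientation-reversing element of $D_{p}$ is an involution, the torsion-freeness of $G$ forces the conclusion without any appeal to the Klein--Maskit combination theorems, the lifting criterion, or the counting formula of Theorem~\ref{teo1}.
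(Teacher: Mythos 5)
Your proof is correct and follows essentially the same route as the paper's: both arguments reduce to the observation that an orientation-reversing element projects to an involution in the dihedral quotient $J$, so its square lies in the Schottky group $G$ and is therefore trivial or loxodromic. The only cosmetic difference is that you dispose of the pseudo-parabolic case by citing the earlier proposition, whereas the paper gets it for free from $a^{2}\in G$ being purely loxodromic.
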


\begin{proof}
An orientation-reversing element $a \in K$ is a lift of a conjugate of either $\tau_1$ or $\tau_2$,
each one a symmetry of $S^{+}$, so $a^2\in \Gamma$. In this way, either (i) $a^2=1$, in which case $a$ is either a reflection or an
imaginary reflection, or (ii) $a^{2}$ is a loxodromic transformation, in which case it is a glide-reflection. 
\end{proof}

\subsubsection{Structural loops and regions}
Let us consider the regular planar Schottky covering $P:\Omega \to S^{+}$. 
As the group $J$ lifts under $P$ (such a lifting is the group $K$), Theorem
\ref{lifting} asserts the existence of a $J$-invariant collection of pairwise disjoint loops $\mathcal F$ in $S^{+}$ which
divides $S^{+}$ into genus zero surfaces (since we are dealing with a Schottky covering) and each loop lifting to a
simple loop on $\Omega$. If $A$ is a connected component of $S \setminus {\mathcal F}$ and $J_{A}$ is its $J$-stabilizer, 
then (as $J$ is a dihedral group) the subgroup $J_{A}$ is either:
\begin{itemize}
\item[(i)] trivial; or
\item[(ii)] a cyclic conformal group generated by a power of $\tau_{1}\tau_{2}$; or
\item[(iii)] a cyclic group of order two generated by a symmetry (which is conjugated to either $\tau_{1}$ or $\tau_{2}$); or
\item[(iv)] a dihedral subgroup of $J$.
\end{itemize}

In either case (iii) or (iv) we have that $A$ is invariant under symmetry $\tau \in J$. If
$\tau$ is either (a) a reflection containing a loop of fixed points in $A$ or (b) an imaginary reflection, then
we may find a simple loop $\beta \subset A$ which is invariant under $\tau$; moreover, if $\tau$ is reflection,
then $\beta$ is formed of only fixed points of it. We may add such a loop and its $J$-translated to ${\mathcal
F}$ without destroying the conditions of Theorem \ref{lifting}. In this way, we may also  assume that ${\mathcal F}$ satisfies the following extra property:

\begin{enumerate}
\item[(v)] Every symmetry in $J_{A}$ is necessarily a
reflection whose circle of fixed points is not completely contained in $A$ (it intersects some boundary loops).
\end{enumerate}

Let us observe that  we may assume ${\mathcal F}$ to be minimal in the sense that there is no proper subcollection of it satisfying (i)-(v) above.
The loops in ${\mathcal F}$ are called the {\it base structure loops} and the connected components of $S^{+} \setminus {\mathcal F}$ are called the {\it base structure
regions}. Both collections are $J$-invariants.

Let $\mathcal G$ be the collection of loops on $\Omega$ obtained by the lifting of those in $\mathcal F$; these are called 
 the {\it structure loops}.  The connected components
of $\Omega \setminus {\mathcal G}$ will be called the {\it structure regions}.  Both collections, ${\mathcal G}$ and the set of structure regions, are $K$-invariant.

\subsubsection{Stabilizers of structure regions}
Let $R$ be a structure region and $K_{R}$ be its $K$-stabilizer. As $P:R \to P(R)=A$ is a homeomorphism, $K_{R}$
is isomorphic to $J_{A}$. As a consequence we obtain the following fact.

\begin{prop}\label{regstab}
If $R$ is a structure region, then $K_{R}$ is either:
\begin{itemize}
\item[(i)] trivial;
\item[(ii)] a finite cyclic group generated by an elliptic transformation;
\item[(iii)] a cyclic group of order two generated by a reflection
whose circle of fixed points is not completely contained on $R$;
\item[(iv)] a dihedral group generated two reflections, each one 
with its circle of fixed points intersecting some boundary loop of $R$.
\end{itemize}
\end{prop}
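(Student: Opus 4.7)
The plan is to transfer the classification of $J_A$ given in Proposition \ref{existence}(iv) to $\widehat{G}_R$ by using that $P:R \to A$ is a conformal homeomorphism. Since $\mathcal G$ is the full lift of $\mathcal F$ and $R$ is a component of $\Omega\setminus\mathcal G$ lying over $A$, the map $\Psi:\widehat{G}_R \to J_A$, $a \mapsto P\circ a|_R\circ P^{-1}$, is a group isomorphism: surjectivity comes from lifting any $j\in J_A$ to $\widehat G$ and choosing the lift that stabilizes $R$, while injectivity is immediate from $P$ being a homeomorphism. Thus $\widehat G_R$ is abstractly one of the four groups listed in Proposition \ref{existence}(iv), and what remains is to identify the M\"obius/extended-M\"obius type of the lift of each generator.

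In case (iv.1) the group is trivial, giving (i). In case (iv.2) the generator of $J_A$ is a power of $\sigma$, hence orientation-preserving, so its lift lies in $\mathbb M$; a finite cyclic subgroup of $\mathbb M$ consisting of orientation-preserving transformations is cyclic elliptic, giving (ii). In case (iv.3) the lift $\widetilde\tau$ of the reflection $\tau$ is an orientation-reversing involution in $\widehat G$, so by Proposition \ref{reversing} it is either a reflection or an imaginary reflection. I would rule out the second possibility by invoking the construction leading to Proposition \ref{existence}: the loop-addition procedure forces the fixed circle of $\tau$ on $S^+$ to meet at least one boundary loop of $A$, providing a fixed point of $\tau$ in $\overline A$; this lifts through $P$ to a fixed point of $\widetilde\tau$ in $\overline R$, incompatible with $\widetilde\tau$ being an imaginary reflection (which has no fixed points on $\widehat{\mathbb C}$). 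The ``no oval completely in $A$'' condition then translates verbatim, via the conformal $P$, to ``circle of fixed points of $\widetilde\tau$ not completely contained in $R$'', giving (iii).

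Case (iv.4) combines the previous two: the cyclic-conformal part lifts to an elliptic $\eta\in\mathbb M$, the reflection part to a reflection $\widetilde\tau$ as in (iii), and the dihedral relation $(\eta\widetilde\tau)^2=1$ transfers from $J_A$ to $\widehat G$ through $\Psi$. The stronger condition ``circle of fixed points intersects some boundary loop of $R$'' in (iv) follows because a finite dihedral subgroup of $\widehat{\mathbb M}$ generated by an elliptic and a reflection forces the two fixed points of the elliptic to lie on the fixed circle of the reflection; since the fixed points of $\eta$ lie on boundary loops of $R$ that are permuted by the dihedral action, the circle of fixed points of $\widetilde\tau$ genuinely meets $\partial R$.

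The hardest step will be the exclusion in case (iv.3) of $\widetilde\tau$ being an imaginary reflection: it depends on tracking faithfully the minimality and loop-addition normalizations built into Proposition \ref{existence}, which are what guarantee that the fixed circle of every anticonformal-involution stabilizer actually reaches the boundary of the base region. Once this is in hand, case (iv.4) follows by combining it with elementary geometric facts about finite dihedral subgroups of $\widehat{\mathbb M}$.
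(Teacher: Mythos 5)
Your proposal is correct and follows essentially the same route as the paper, which derives this proposition in one line from the fact that $P:R\to A$ is a conformal homeomorphism, so that $\widehat{G}_{R}\cong J_{A}$ and the classification of Proposition 5.6 (the stabilizers $J_{A}$) transfers directly. The type-identification details you supply (elliptic lifts of powers of $\sigma$, exclusion of imaginary reflections via fixed points of $\tau$ in $\overline{A}$ coming from the loop-addition normalization) are left implicit in the paper but are consistent with its construction.
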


\begin{prop} If $R$ is a structure region with $K_{R}=\{I\}$,
then the restriction to $R$ of the projection map from $\Omega$ to $S=S^{+}/J=\Omega/K$ is a homeomorphism
onto its image.
\end{prop}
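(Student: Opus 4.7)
The plan is to verify injectivity of $\pi|_R$ (where $\pi:\Omega\to S=\Omega/\widehat{G}$) and then use openness of the quotient map to conclude it is a homeomorphism onto its image.

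First I would recall, from the construction preceding the statement, that the collection of structure regions is $\widehat{G}$-invariant, i.e., $\widehat{G}$ permutes the connected components of $\Omega-\mathcal{G}$. Since any two distinct structure regions are distinct connected components of the open set $\Omega-\mathcal{G}$, they are disjoint. This is the only piece of geometric input I need beyond the fact that $\widehat{G}$ acts properly discontinuously on $\Omega$.

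Next, to show injectivity, take $z_{1},z_{2}\in R$ with $\pi(z_{1})=\pi(z_{2})$. Then there exists $g\in\widehat{G}$ with $g(z_{1})=z_{2}$. Now $g(R)$ is again a structure region, and $z_{2}\in g(R)\cap R$, so by disjointness of distinct structure regions we must have $g(R)=R$. Thus $g\in\widehat{G}_{R}$, which is trivial by hypothesis, so $g=1$ and $z_{1}=z_{2}$. Hence $\pi|_{R}$ is injective.

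Finally, $\pi$ is continuous, and since $\widehat{G}$ acts properly discontinuously on $\Omega$, the quotient map $\pi:\Omega\to\Omega/\widehat{G}$ is open: for any open $U\subset\Omega$, one has $\pi^{-1}(\pi(U))=\bigcup_{g\in\widehat{G}}g(U)$, which is open, so $\pi(U)$ is open in $S$. Thus $\pi|_{R}:R\to\pi(R)$ is an open, continuous bijection onto its image, hence a homeomorphism.

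The argument is short; the only conceptual point worth highlighting, rather than a real obstacle, is verifying that $\widehat{G}$ preserves the family of structure regions, which is inherited from the $J$-invariance of $\mathcal{F}$ in Proposition~\ref{existence} together with the fact that the set of structure loops $\mathcal{G}$ has been defined as the $G$-lift of $\mathcal{F}$ and is therefore stable under the whole of $\widehat{G}$.
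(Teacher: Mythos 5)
Your proof is correct and follows essentially the same route as the paper: the paper's own (one-line) argument is exactly the dichotomy that any $a\in\widehat{G}$ either lies in $\widehat{G}_{R}=\{I\}$ and fixes $R$, or else satisfies $a(R)\cap R=\emptyset$, which is the injectivity step you spell out via the $\widehat{G}$-invariance and pairwise disjointness of the structure regions. Your additional remarks on the openness of the quotient map just make explicit what the paper leaves tacit.
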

\begin{proof}If $k\in K$, then either $k\in K_{R}=\{I\}$,
in which case, $k(R)=R$, or $k\not\in K_{R}$, in which case, $k(R)\cap R =\emptyset$.
\end{proof}

\subsubsection{Stabilizers of structure loops}
If $\beta \in {\mathcal G}$, then we denote its $K$-stabilizer as $K_{\beta}$.

As $J^{+}=\langle \tau_{1}\tau_{2}\rangle$ (a cyclic group), the orientation-preserving half of $K_{\beta}$ is either trivial or a finite cyclic
group generated by some elliptic element.

By Proposition \ref{reversing}, an orientation-reversing transformation inside $K$ is
either an imaginary reflection or a reflection or a glide-reflection transformation. As the structure loop $\beta$
is contained in $\Omega$, a glide-reflection cannot belong to $K_{\beta}$.
In particular, the only orientation-reversing transformations in $K$ that keep invariant
some structure loop can be either an imaginary reflection or a reflection. 
Also, $K_{\beta}$ cannot contain two different imaginary reflections, for the product of two distinct imaginary reflections is always
hyperbolic.

The structure loop $\beta$ can be stabilized by a
reflection in two different manners. One is that it fixes it point-wise, that is, $\beta$ is
the circle of fixed points of the reflection. The second one is that $\beta$ is not point-wise
fixed by the reflection, in which case, there are exactly two fixed points of the reflection on it. 
These two points divide the loop into two arcs which are permuted by the reflection. 

Summarizing all the the above is the following.

\begin{prop} \label{loopsestab}
If $\beta \in {\mathcal G}$, then $K_{\beta}$ is either:
\begin{itemize}
\item[(i)] trivial;

\item[(ii)] a cyclic group generated by an elliptic element of finite order whose fixed points are separated by $\beta$;

\item[(iii)] a cyclic group generated by an elliptic element of order two with its fixed points on $\beta$;

\item[(iv)] a cyclic group generated by an imaginary reflection;

\item[(v)] a cyclic group generated by an reflection with $\beta$ as its circle of fixed points;

\item[(vi)] a cyclic group generated by a reflection and $\beta$ containing exactly two fixed points of it;

\item[(vii)] a group generated by a reflection, with exactly two fixed points on $\beta$, and an elliptic involution with these two points as its fixed points.
In this case, the composition of these two is a reflection with $\beta$ as circle of fixed points and  $K_{\beta}$ is isomorphic to ${\mathbb Z}_{2}^{2}$;

\item[(viii)] a group generated by an elliptic involution, whose fixed points are separated by $\beta$, and a reflection with $\beta$ as
its circle of fixed points. In this case,  the composition of these two is an imaginary reflection and $K_{\beta}$ is isomorphic to ${\mathbb Z}_{2}^{2}$;

\item[(ix)] a group generated by an elliptic involution, with both fixed points on $\beta$, and a reflection whose circle of fixed points intersects $\beta$ at two points and separating the fixed points of the elliptic involution. 
In this case, $K_{\beta}$ is isomorphic to ${\mathbb Z}_{2}^{2}$; and

\item[(x)] a group generated by a reflection, with exactly two fixed points on
$\beta$, and an elliptic involution with both fixed points on the circle of fixed points of the reflection.
In this case, $K_{\beta}$ is isomorphic to ${\mathbb Z}_{2}^{2}$;
 \end{itemize}
\end{prop}

\subsubsection{}
Later, we will construct a connected compact domain by gluing a finite set of structure regions an loops such that no two of these structure regions are $K$-equivalent. 
If two structure regions share a boundary structural loop, then they are $K$-equivalent if there is some element of $K$ sending a boundary loop of one to a boundary loop of the other. This permits to obtain the following simple fact.

\begin{prop}\label{equivalente}
Let $R$ and $R'$ be any two different structure regions with a common boundary loop $\beta$. Then, they are
$K$-equivalent if and only if either: (i) $K_{\beta}$ contains an element (necessarily of order two) which does not belong
to $K_{R}$ or (ii) there is another boundary loop $\beta'$ of $R$ and an element $k \in
K \setminus K_{R}$ so that $k(\beta')=\beta$ (necessarily a loxodromic or glide-reflection).
\end{prop}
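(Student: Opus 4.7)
The plan is to exploit the planarity of $\Omega$: since $\beta\in \mathcal G$ is a simple closed loop in $\Omega$ and the loops in $\mathcal G$ are pairwise disjoint, $\beta$ locally separates $\Omega$ into two sides, and therefore has exactly two structure regions adjacent to it. By hypothesis these two adjacent regions are $R$ and $R'$. With this observation the statement reduces to a short case analysis on whether a given equivalence element preserves $\beta$.

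For the forward implication, suppose $g\in\widehat{G}$ satisfies $g(R)=R'$. Then $g^{-1}(\beta)$ is a boundary loop of $g^{-1}(R')=R$. I split into two cases according to whether $g^{-1}(\beta)=\beta$. In the first case, $g\in\widehat{G}_\beta$, and since $g(R)=R'\neq R$ we have $g\notin\widehat{G}_R$, giving condition (i). In the second case, set $\beta'=g^{-1}(\beta)$: this is a boundary loop of $R$ different from $\beta$, and $t=g$ satisfies $t(\beta')=\beta$ with $t\notin\widehat{G}_R$ (again because $t(R)=R'\neq R$), giving condition (ii).

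For the reverse implication, assume (i) and pick $g\in\widehat{G}_\beta-\widehat{G}_R$. Since $g$ fixes $\beta$, it permutes the set of structure regions adjacent to $\beta$, which is exactly $\{R,R'\}$; as $g(R)\neq R$, necessarily $g(R)=R'$. Assume now (ii): the region $t(R)$ has $t(\beta')=\beta$ as a boundary loop, so $t(R)\in\{R,R'\}$ by the two-sides observation, and since $t\notin\widehat{G}_R$ we must have $t(R)=R'$.

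No serious obstacle is expected. The only point requiring care is the assertion that every structure loop has exactly two adjacent structure regions, which follows immediately from the facts that $\Omega$ is a planar domain and that the loops of $\mathcal G$ are pairwise disjoint simple closed curves (properties (i) and (iii) of Proposition \ref{existence} together with its minimality). Once this is in place, both directions are straightforward diagram chases on how $\widehat{G}$ acts on structure loops and their two sides.
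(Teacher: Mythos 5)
Your proof is correct. The paper offers no argument for this proposition (it is introduced with ``the following fact is easy to see''), and your justification is the natural one: the key observation that a structure loop has exactly two adjacent structure regions, one in each complementary disc (which uses that $\mathcal G$ is a locally finite family of pairwise disjoint simple loops in the planar domain $\Omega$ --- not really the minimality of $\mathcal F$), after which both directions are the straightforward case analyses you give.
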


\subsubsection{}
Next result is  related to those structure regions with $K$-stabilizer being either trivial or a cyclic group generated by
a reflection.

\begin{prop}\label{trivial}
Let $R$ be a structure region with $K_{R}$ being either trivial or a cyclic group
of order two generated by a reflection. If
$\beta$ is a boundary loop of $R$ such that $K_{R} \cap K_{\beta}=\{I\}$, then there is
a non-trivial element $k \in K \setminus K_{R}$ so that $k(\beta)$ still a boundary loop of $R$.
\end{prop}
\begin{proof}
The hypothesis on $K_{R}$ asserts that the only possibilities in 
Proposition \ref{loopsestab} for $K_{\beta}$ are (i), (iii), (iv) and (v). 
In all of these cases, with the exception of (i), $K_{\beta}$ contains an element outside $K_{R}$.
In case (i) $K_{\beta}$ is trivial. The projection of $\beta$ on $S^{+}$ is a simple loop
$\beta_{*}$ which has trivial $J$-stabilizer. We have that $\beta$ is free homotopic to the product of the
other boundary loops of $R$. If none of the other boundary loops of $R$ is equivalent to $\beta$ under
$K$, then we may delete $\beta_{*}$ and its $J$-translates from ${\mathcal F}$,  contradicting the
minimality of ${\mathcal F}$.
\end{proof}

\subsubsection{Structure loops with non-trivial conformal stabilizer}
Let $R$ be a structure region with $K_{R}$ neither trivial or a cyclic group generated by a reflection. 
Proposition \ref{regstab} asserts that $K_{R}$ is either a cyclic group generated by an elliptic transformation or it is a dihedral group generated by two reflections.
In either situation, $H=K_{R}^{+}$ is a non-trivial elliptic cyclic group. 
The structure region  $R$ has either $0$, $1$ or $2$ boundary structure loops being stabilized by $H$. The other boundary structure loops, if any, have trivial $H$-stabilizers.  

If no structure loop on the boundary of $R$ is stabilized by
$H$, then both fixed points of $H$ lie in $R$. Also, a boundary structure loop is stabilized by $H$ if and
only if it separates the fixed points of $H$. 

If $K_{R}$ is a dihedral group and $R$ contains only one of the fixed points of $H$, then 
such a fixed point is fixed by the reflections in $K_{R}$. This, in particular, asserts that each reflection in $K_{R}$ will commute with the elements of $H$; this obligates to have $K_{R} \cong {\mathbb Z}_{2}^{2}$.

\begin{prop} Let $R$ be a structure region with $K_{R}^{+}=H$ being non-trivial.  If there is a fixed point of $H$
in $R$, then both fixed points of $H$ lie in $R$.
\end{prop}
\begin{proof}
Suppose there is only one fixed point in $R$ of the cyclic group $H$.
Then there is a unique structure loop $\beta$ on the boundary of $R$ stabilized by $H$. Every other structure
loop, on the boundary of $R$, has $H$-stabilizer the identity.
If $K_{R}=H$, then it follows that if were to fill in the discs bounded by the other structure loops on
the boundary of $R$, then $\beta$ would be contractible; that is, if we delete the projection of $\beta$ and their
$J$-translates from our list of base structure loops, this would leave unchanged the smallest normal subgroup
containing the base structure loops raised to appropriate powers. Since we have chosen our base structure loops
to be minimal, this cannot be.
If $K_{R} \neq H$, then we have a reflection $\tau \in K_{R}$ whose circle of fixed points is
not completely contained in $R$, and $K_{R}=\langle H,\tau\rangle$ is a dihedral group. In this case, 
the fixed point of $H$ contained in $R$ is also fixed by $\tau$ (so both fixed points of $H$ are fixed
by $\tau$ as $\tau H \tau=H$), $K_{R} \cong {\mathbb Z}_{2}^{2}$ and  $H \cong {\mathbb Z}_{2}$. In this case, we
may also delete the projection of $\beta$ and its $J$-translates from ${\mathcal F}$ in order to get a contradiction
to the minimality of ${\mathcal F}$.
\end{proof}

The previous result asserts that a non-trivial elliptic transformation in $K_{R}^{+}$ either has both fixed points on the structure region $R$ or none of them belong to it. In the last case, there are (exactly) two boundary structure loops of $R$, each one invariant under such an elliptic transformation.

\begin{prop}
Let $R$ be a structure region with non-trivial $K_{R}^{+}=H$.
If $\beta_{1}, \beta_{2}$ are two different boundary loop of $R$ which are invariant
under $H$, then there is a (non-trivial) element $k \in K$ so that $k(\beta_{1})=\beta_{2}$ (such an element is either loxodromic or glide-reflection).
\end{prop}
\begin{proof}
Let us assume that there is no such element of $K$ as desired and let $R_{*}$ be the other structure region sharing $\beta_{2}$ in its boundary. On the region $R_{*}$ there is nother boundary loop $\beta_{3}$ which is invariant under $H$. All other boundary loops of $R \cup R_{*}$ (with the exception of $\beta_{1}$, $\beta_{2}$ and $\beta_{3}$) have trivial $K$-stabilizers. In particular, they are not $K$-equivalents to $\beta_{1}$, $\beta_{2}$ and $\beta_{3}$. Also, $\beta_{2}$ is neither $K$-equivalent to $\beta_{1}$ and $\beta_{3}$ (by our assumption). If we project the region $R \cup R_{*} \cup \beta_{2}$ on $S^{+}$, we obtain an homeomorphic copy and the projected loop from $\beta_{2}$ is not $J$-equivalent to none of its boundary loops. In particular, we may delete it (and its $J$-translates) obtaining a contradiction to the minimality of ${\mathcal F}$.
\end{proof}

\subsubsection{Structure regions with trivial stabilizers}
Let $R$ be a structure region with trivial stabilizer
$K_{R}=\{I\}$. As consequence of Proposition \ref{trivial}, every other structure region is necessarily $K$-equivalent
to $R$. It follows that $R$ is a fundamental domain for $K$ and the boundary loops are paired by
either reflections, imaginary reflections, loxodromic transformations or glide-reflections. In this case we obtain that $K$ is the free product, by the Klein-Maskit combinatioin theorem, of groups of types (i)-(iv) as described in the theorem.

\subsubsection{Structure regions with non-trivial stabilizers}
Let us now assume there is no structural region with trivial $K$-stabilizer.

Proposition \ref{equivalente}, and the fact that $S$ is compact and connected, permits us to construct a
connected set $\widehat{R}$ obtained as the union of a finite collection of $K$-non-equivalent structure regions
(each of them has non-trivial $K$-stabilizer) together their boundary structure loops. 


\smallskip
\noindent (1) Let $R \subset \widehat{R}$  be a structure region with $K_{R}=\langle \tau \rangle$, where $\tau$ is a reflection.
The circle of fixed points $C_{\tau}$ of $\tau$
intersects some structure loops in the boundary of $R$. If $\gamma=C_{\tau} \cap R$, then $R-\gamma$ consists of two domains $R_{1}^{0}$ and $R_{2}^{0}$. Set 
 $R^{*}_{j}=R^{0}_{j} \cup \gamma \subset R$. 
 
By Proposition \ref{trivial}, the structural loops contained in the interior of $R^{*}_{1} \cup R^{*}_{2}$ are paired by either
reflections, imaginary reflections,  loxodromic transformations or glide-reflection transformations. This process provides free products (in the sense of the Klein-Maskit combination theorem) of groups of types (i)-(iv) as described in the theorem.

The other boundary loops of $R$ intersect $C_{\tau}$. Let $\beta$ be any structure loop in the boundary
of $R$ which intersects $C_{\tau}$, necessarily at two points. 

(1.1) If $\beta$ belongs to the boundary of
$\widehat{R}$, then we already noted that either
\begin{itemize}
\item[(a)] there is an involution $k \in K$ (conformal or anticonformal) with $k(\beta)=\beta$ and
$k(\widehat{R}) \cap \widehat{R}=\beta$; or
\item[(b)] there is another boundary loop
$\beta'$ of $\widehat{R}$ and an element $\sigma \in K$ (which is either loxodromic or a glide-reflection) so that $\sigma(\beta)=\beta'$ and
$\sigma(\widehat{R}) \cap \widehat{R}=\beta'$. If $\sigma$ is a glide-reflection, then $\tau\sigma$ is a loxodromic with the same property.
\end{itemize}

In any of the two situations above, we perform the HNN-extension (in the sense of Kleini-Maksti combination theorem) to produce factors of type (2), (4) or (5) in the descriotion of basic extended dihedral groups as in Section \ref{ejemploBEDG}.

(1.2) If $\beta$ is in the interior of $\widehat{R}$, then we have another structure region $R' \subset \widehat{R}$
with $\beta$ as one of its border loop. In this case, as consequence of Proposition \ref{equivalente}, we should
have $K_{\beta}=\langle \tau\rangle$ and $\tau \in K_{R'}$.

The above process produces basic extended dihedral groups using the circle $C_{\tau}$ and its reflection $\tau$.

\smallskip
\noindent (2) Let $R \subset \widehat{R}$  be a structure region with $K_{R}=\langle \phi \rangle$, where $\phi$ is an elliptic transformation. In this case we have two possibilities: either (i) both fixed
points of $\phi$ belong to $R$ or (ii) there are two boundary loops $\beta_{1}$ and $\beta_{2}$ of $R$, each one
invariant under $\phi$, and there is some $k \in K$ with $k(\beta_{1})=\beta_{2}$,
$k(\widehat{R}) \cap \widehat{R}=\beta_{2}$. We have that each other boundary loop $\beta$ of $R$ is either
(i) the boundary of another structure region inside $\widehat{R}$ (so it has trivial stabilizer in $K$)
or (ii) it belongs to the boundary of $\widehat{R}$ (in which case we have either (a) or (b) above). This process produces the free product of groups of types (v), (vi) and (vii) as in the theorem.

\smallskip
\noindent (3) Assume one of the structure regions $R \subset \widehat{R}$ has stabilizer a dihedral group. In this case, $K^{+}_{R}$ is a non-trivial cyclic group generated by an elliptic transformation. We may proceed similarly as above and we again produce  basic extended dihedral groups.

\subsubsection{}
All the above together permits to see that $K$ can be constructed, by the Klein-Maskit combination theorem, by using the groups as stated in the theorem.
$\blacksquare$

\section{Proof of Theorem \ref{sumak=2}}\label{Sec:pruebasumak=2}
Let $\Gamma$ be a Schottky group of rank $g \geq 2$ with $M=({\mathbb H}^{3} \cup \Omega)/\Gamma$, where $\Omega$
is the region of discontinuity of $\Gamma$.

\subsection{Proof of part (1)}
By lifting both $\tau_{1}$ and $\tau_{2}$ to ${\mathbb H}^{3}$
we obtain an extended Kleinian group $\widehat{K}$ containing $\Gamma$ as a normal subgroup so that
$\widehat{G}=\widehat{K}/\Gamma =\langle \tau_{1}, \tau_{2} \rangle \cong {\mathbb D}_{q}$, that is, $\widehat{K}$ is a dihedral extended Schottky group. 
Let $\theta:\widehat{K} \to \widehat{G}$ be the canonical surjection and  by $c_{1},\ldots, c_{r} \in
\widehat{K}$ a complete set of symmetries.
By Theorem \ref{const0}, $\widehat{K}$ is constructed using reflections $\zeta_{1}$,\ldots, $\zeta_{\alpha}$, imaginary
reflections $\eta_{1}$,\ldots, $\eta_{\beta}$, $\gamma$ cyclic groups generated by loxodromic transformations,
$\delta$ cyclic groups generated by glide-reflections, $\rho$ cyclic groups generated by an elliptic transformation of finite order,
$\eta$ groups generated by an elliptic transformation of finite order and a loxodromic transformation commuting with it, 
$\kappa$ groups generated by an elliptic transformation of finite order and a glide-reflection (conjugating the elliptic into its inverse)
and $\varepsilon$ basic extended dihedral groups $K_{1}$,\ldots,
$K_{\varepsilon}$. 

Each $K_{i}$ is generated by a reflection $\sigma_{i}$ and some other loxodromic and elliptic
transformations, where some of them do not commute with loxodromic ones, say $t_{i1}$,\ldots, $t_{im_{i}}$ (all of them of order at least $3$ and not commuting with $\sigma_{i}$)
and some $f_{i}$ imaginary reflections and reflections (each of them commuting with $\sigma_{i}$); let us denote
these involutions by $\sigma_{i1}$,\ldots, $\sigma_{if_{i}}$. 
By the Riemann-Hurwitz formula (see Remark \ref{observa1}), 
$g \geq q(\widetilde{g}-1)+1+\frac{q}{2}\sum_{i=1}^{\varepsilon}f_{i}$,
where
$\widetilde{g}=\alpha +\beta +2(\gamma+\delta+\kappa+\eta)+ \varepsilon +g_{1}+\cdots+g_{\varepsilon}-1 \geq \alpha + \beta + \varepsilon -1.
$
So, it follows that
$$
\frac{g-1}{q}+2 \geq \widetilde{g}+1+\frac{1}{2}(f_{1}+\cdots+f_{\varepsilon})\geq \alpha+\beta+\varepsilon+\frac{1}{2}(f_{1}+\cdots+f_{\varepsilon}).$$

Also, note that a complete set of symmetries for $\widehat{K}$ is given by $\zeta_{1}$,\ldots, $\zeta_{\alpha}$,
$\eta_{1}$,\ldots, $\eta_{\beta}$, $\sigma_{i}$, $\sigma_{ik}$, where  $k=1,\ldots,f_{i}$  and  $i=1,\ldots,\varepsilon$. 
If $c$ denotes any of the above symmetries, then $\langle c \rangle < {\rm C}
(\widehat{K},c)$ and so
$$\langle \theta(c) \rangle \subseteq \theta({\rm C} (\widehat{K},c))\subseteq
{\rm C} (\widehat{G},\theta(c)) \cong \left\{\begin{array}{ll}
{\rm Z}_{2}^{2}, & \mbox{$q$ even}\\
{\rm Z}_{2}, & \mbox{$q$ odd}
\end{array}
\right.
$$
Now, it is easy to see the following
$$\theta({\rm C}(\widehat{K};\zeta_{j}))=\langle \theta(\zeta_{j})\rangle={\mathbb Z}_{2}, \;
\theta({\rm C}(\widehat{K};\eta_{j}))=\langle \theta(\eta_{j})\rangle={\mathbb Z}_{2},\; \theta({\rm C}(\widehat{K};\sigma_{jk}))=\theta(\langle \sigma_{j}, \sigma_{jk}\rangle)={\mathbb Z}_{2}^{2},$$
$$\theta({\rm C}(\widehat{K};\sigma_{j}))=\theta(\langle \sigma_{j},
\sigma_{j1},\ldots,\sigma_{jf_{j}},\epsilon_{j1},\ldots, \epsilon_{jm_{j}} \rangle)={\mathbb Z}_{2}^{2}.
$$
Finally,  it follows from Theorem \ref{teo1} that
$$m_{1}+m_{2} \leq 2(\alpha + \beta )+\varepsilon +(f_{1}+\cdots +f_{\varepsilon }) \leq
2(\alpha + \beta +\varepsilon )+(f_{1}+\cdots +f_{\varepsilon}) \leq
2\left(\frac{g-1}{q}\right)+4.$$
$\blacksquare$

\subsection{Proof of part (2)}
Let $q_{ij}$ be to denote the order of $\tau_{i}\tau_{j}$, where $i<j$. By a permutation of the indices, we may
assume that $2 \leq q_{12} \leq q_{13} \leq q_{23}$. As consequence of Part (1)  one has the
inequality
\begin{equation}\label{ec1}
m_{1}+m_{2}+m_{3} \leq \left[ \frac{g-1}{q_{12}}\right] +\left[ \frac{g-1}{q_{13}}\right]+\left[
\frac{g-1}{q_{23}}\right]+6.
\end{equation}

\subsubsection{Case $ g=2$}
In this case, $m_{1}+m_{2}+m_{3} \leq 6$. As $m_{i}+m_{j} \leq 4$, either $m_{1}+m_{2}+m_{3} \leq 5$ or else $m_{1}=m_{2}=m_{3}=2$. Claim \ref{hecho1} ends with the proof for $g=2$.
$\blacksquare$

\begin{claim}\label{hecho1}
The case $m_{1}=m_{2}=m_{3}=2$ is not possible for $g=2$.
\end{claim}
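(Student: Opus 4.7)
\smallskip
\noindent\textit{Plan.} Suppose for contradiction that $m_{1}=m_{2}=m_{3}=2$. Since $g=2$, the universal bound of Theorem \ref{sumak=2}, namely $m_{i}+m_{j}\leq 2[(g-1)/q_{ij}]+4$, specializes to $m_{i}+m_{j}\leq 4$ for every pair (as $q_{ij}\geq 2$). The hypothesis therefore forces each pair-wise sum to saturate this bound: $m_{i}+m_{j}=4$ for every $(i,j)$.

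First I would exploit the structure of each individual symmetry. By Corollary \ref{fijos de simetria} and Theorem \ref{maintheo}, if $\widehat{\Gamma}_{i}=\langle \Gamma, c_{i}\rangle$ is the associated extended Schottky group with Klein-Maskit data $(\alpha_{i},\beta_{i},\gamma_{i},\delta_{i},\varepsilon_{i};r_{i,1},\ldots,r_{i,\varepsilon_{i}})$, then $m_{i}=\alpha_{i}+\beta_{i}+\varepsilon_{i}=2$ and the rank equation
\begin{equation*}
2=g=\alpha_{i}+\beta_{i}+2(\gamma_{i}+\delta_{i})+\varepsilon_{i}-1+r_{i,1}+\cdots+r_{i,\varepsilon_{i}}
\end{equation*}
holds. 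Subtracting yields $2(\gamma_{i}+\delta_{i})+r_{i,1}+\cdots+r_{i,\varepsilon_{i}}=1$, and since each $r_{i,k}\geq 1$ this forces $\gamma_{i}=\delta_{i}=0$, $\varepsilon_{i}=1$, $r_{i,1}=1$ and $\alpha_{i}+\beta_{i}=1$. So each $\widehat{\Gamma}_{i}$ is the Klein-Maskit free product of a cyclic group generated by a single reflection or imaginary reflection together with a rank-$1$ real Schottky group $\langle \sigma_{i},A_{i}\rangle$; the fixed set of $\tau_{i}$ then consists of exactly one disc or isolated point, together with one annulus or M\"obius band.

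Next I would insert this data into the dihedral structure of each pair via Theorem \ref{const0}(3)(c) applied to $\widehat{K}_{ij}=\langle \widehat{\Gamma}_{i},\widehat{\Gamma}_{j}\rangle$. The genus equation $g=2$, combined with the equality $m_{i}+m_{j}=4$ traced through the inequalities in the proof of Theorem \ref{sumak=2} in Section \ref{Sec:pruebasumak=2}, leaves only a short explicit list of admissible tuples $(q_{ij},\widetilde{g}_{ij},|t_{ij,k}|,n_{ij,\cdot})$ for each pair. Applying then the counting formula of Theorem \ref{teo1} to the full extended Kleinian group $\widehat{K}=\langle \widehat{\Gamma}_{1},\widehat{\Gamma}_{2},\widehat{\Gamma}_{3}\rangle$, I would express $m_{1}+m_{2}+m_{3}$ in terms of centralizer indices in $G=\widehat{K}/\Gamma$ and check that the total cannot reach $6$.

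The main obstacle will be the final bookkeeping: one has to verify that the three distinguished rank-$1$ real Schottky factors $\langle \sigma_{i},A_{i}\rangle$ produced by the three $\widehat{\Gamma}_{i}$ cannot all be simultaneously accommodated inside the rank-$2$ Schottky group $\Gamma$ in a way compatible with the dihedral actions $\widehat{K}_{ij}/\Gamma\cong {\rm D}_{q_{ij}}$. I expect the contradiction to emerge from this compatibility analysis, possibly most transparently through an Euler-characteristic count for the fixed sets of the (pairwise commuting or nearly commuting) $\tau_{i}$ on the genus-$2$ handlebody, which forces one of the three $\varepsilon_{i}=1$ factors to collapse or merge with another, lowering some $m_{i}$ below $2$.
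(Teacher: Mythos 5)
Your opening step is correct and is a nice observation: combining $m_{i}=\alpha_{i}+\beta_{i}+\varepsilon_{i}=2$ with the rank formula of Theorem \ref{maintheo} for $g=2$ gives $2(\gamma_{i}+\delta_{i})+r_{i,1}+\cdots+r_{i,\varepsilon_{i}}=1$, hence $\gamma_{i}=\delta_{i}=0$, $\varepsilon_{i}=1$, $r_{i,1}=1$, $\alpha_{i}+\beta_{i}=1$. That pins down each individual $\widehat{\Gamma}_{i}$, and it is a legitimate (slightly different) entry point from the paper, which instead works from the start with the pairwise dihedral groups $K_{j}=\theta^{-1}(\langle\tau_{r},\tau_{s}\rangle)$.

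However, from that point on your text is a plan rather than a proof, and the essential idea is missing. The step ``apply Theorem \ref{teo1} to $\widehat{K}$ and check that the total cannot reach $6$'' is exactly the statement to be proved and no computation is offered; your final paragraph explicitly concedes that the contradiction has not been located and only speculates (``I expect the contradiction to emerge\dots possibly through an Euler-characteristic count''). The mechanism the paper actually uses is quite specific and is not anticipated anywhere in your proposal: the genus-$2$ constraint forces each pairwise group $K_{j}$ to have structure data $(\alpha,\beta,\gamma,\delta,\varepsilon)=(0,0,0,0,1)$ with $n_{1}+m_{1}=3$, i.e.\ $K_{j}\cong{\mathbb Z}_{2}\oplus({\mathbb Z}_{2}*{\mathbb Z}_{2}*{\mathbb Z}_{2})$ generated by a reflection $\rho_{j1}$ and three involutions commuting with it; consequently the limit set of $K_{j}$ lies on the circle of fixed points of $\rho_{j1}$. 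Since the three $K_{j}$ have finite index in $\widehat{K}$ they share the same infinite limit set, so that circle --- and hence the reflection --- is the same for all three: $\rho_{11}=\rho_{21}=\rho_{31}=\rho$. Then $\theta(\rho)$ lies in $\theta(K_{1})\cap\theta(K_{2})\cap\theta(K_{3})$, which forces $\theta(\rho)=\tau_{1}\tau_{2}\tau_{3}$ and therefore $\tau_{3}\in\langle\tau_{1},\tau_{2}\rangle$, a contradiction. Without this shared-reflection/limit-set argument (or a genuine substitute for it), your proposal does not establish the claim.
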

\begin{proof}
Assume there is a Schottky group $\Gamma$ of rank two, with region of discontinuity $\Omega$,  such that the handlebody
$M=({\mathbb H}^{3} \cup \Omega)/\Gamma$ admits three symmetries $\tau_{1}$, $\tau_{2}$ and $\tau_{3}$, each of
them having exactly two connected components of fixed points. Set $H=\langle \tau_{1},\tau_{2},\tau_{3}\rangle$. 
Let $\pi:{\mathbb H}^{3} \cup \Omega \to M$ be the
universal covering induced by $\Gamma$, and $\widehat{K}$ be the
extended Kleinian group obtained by lifting $H$ under $\pi$. There is a surjective homomorphism
$\theta:\widehat{K} \to H$ with $\Gamma$ as its kernel. Let $K_{j}=\theta^{-1}(\langle
\tau_{r},\tau_{s}\rangle)$, where $r \neq s$ and $r,s \in \{1,2,3\}-\{j\}$, and $p_{j}$ be the order of $\tau_{r}\tau_{s}$. 

The extended Kleinian group $K_{j}$ is a dihedral extended Schottky group (of finite index in $\widehat{K}$) with region of discontinuity $\Omega$.
By Theorem \ref{const0}, $K_{j}$ is constructed by using $\alpha$ cyclic groups generated by
reflections, $\beta$ cyclic groups generated by imaginary reflections, $\gamma$ cyclic groups generated by
loxodromic transformations, $\delta$ cyclic groups generated by glide-reflections, $\rho$ cyclic groups generated by an elliptic transformation of finite order,
$\eta$ groups generated by an elliptic transformation of finite order and a loxodromic transformation commuting with it, $\kappa$ groups generated by an elliptic transformation of finite order and a glide-reflection (conjugating the elliptic into its inverse)
 and $\varepsilon$ basic extended dihedral groups
$\Gamma_{1}$,..., $\Gamma_{\varepsilon}$. 

If $\Omega_{i}$ is the  region of discontinuity of $\Gamma_{i}$, then $\Omega_{i}/\Gamma_{i}^{+}$ has genus $g_{i}$, has
$2r_{i}$ cone points with cone orders $|t_{i1}|,|t_{i1}|,\ldots, |t_{ir_{i}}|,|t_{ir_{i}}|$ (each of them bigger tan two) and $2n_{i}$ cone points of order two.

Then, $\Omega/K_{j}^{+}$ has genus
$\widetilde{g}=\alpha+\beta+2(\gamma+\delta+\kappa+\eta)+\varepsilon +g_{1}+\cdots+g_{\varepsilon} -1$,
with $2(r_{1}+\cdots+r_{\varepsilon})$ conical points of orders $|t_{11}|$, $|t_{11}|$, $|t_{12}|$,
$|t_{12}|$\ldots, $|t_{\varepsilon r_{\varepsilon}}|$, $|t_{\varepsilon r_{\varepsilon}}|$, 
$2(n_{1}+\cdots+n_{\varepsilon})$ conical points of order $2$, and $4\rho$ conical points of orders $l_{1},l_{1},l_{1},l_{1},\ldots, l_{\rho},l_{\rho},l_{\rho},l_{\rho}$,
and
$$2=p_{j}(\widetilde{g}-1)+1+2p_{j}\sum_{j=1}^{\rho} \left(1-\frac{1}{l_{j}}\right)+p_{j}\sum_{i=1}^{\varepsilon}\sum_{k=1}^{r_{i}}\left(1-\frac{1}{|t_{ik}|}\right)+
\frac{p_{j}}{2}\sum_{i=1}^{\varepsilon}n_{i} \geq p_{j}(\widetilde{g}-1)+1.$$

It follows that $\widetilde{g} \in \{0,1\}$. 

If $\alpha=\beta=\varepsilon=0$ (so $\delta>0$), then $\widetilde{g}=2(\gamma+\delta+\kappa+\eta)-1$, so $\widetilde{g}=1=\delta+\kappa$ and $\eta=\gamma=0$. It follows that $K_{j}$ is a elementary group (either (i) generated by a glide-reflection or (ii) a glide-reflection and an elliptic element with the same fixed points), a contradiction (as it must contains a Schottky group of rank two).

If $\alpha+\beta+\varepsilon>0$, it must be that $\gamma=\delta=\kappa=\eta=0$. If $\varepsilon=0$, then $2=\alpha+\beta$ and $K_{j}$ will be
elementary group, again a contradiction. So $\varepsilon \geq 1$. As
$\alpha+\beta+\varepsilon +g_{1}+\cdots+g_{\varepsilon} =\widetilde{g}+1\in \{1,2\}$,
we must have $\varepsilon=1$. In fact, if $\varepsilon \geq 2$, then $\varepsilon=2$, $\alpha=\beta=g_{1}=g_{2}=0$, $\widetilde{g}=1$ and
$$1=p_{j}\sum_{i=1}^{2}\sum_{k=1}^{r_{i}}\left(1-\frac{1}{|t_{ik}|}\right)+
\frac{p_{j}}{2}\sum_{i=1}^{2}n_{i}.$$
If, for instance, $n_{1} \geq 1$, then (as $p_{j} \geq 2$) $r_{1}=r_{2}=0=n_{2}$ and $n_{1}=1$, from which we obtain that $K_{j}$ is elementary, a contradiction. In particular, $n_{1}=n_{2}=0$. The other case can be worked similarly to obtain a contradiction.

Assume now that $\varepsilon=1$. Then $\alpha+\beta+g_{1} \in \{0,1\}$. If $g_{1}=1$, then $\alpha=\beta=0$ and $K_{j}$ is again elementary. So, $g_{1}=0$
and $\widetilde{g}=\alpha+\beta \in \{0,1\}$, that is
$(\alpha,\beta) \in \{(0,0), (1,0), (0,1)\}.$

In the case $(\alpha,\beta)=(0,0)$, we have $\widetilde{g}=0$ and$$2 \geq 1-n
+n\sum_{k=1}^{r_{1}}\left(1-\frac{1}{|t_{ik}|}\right)+ \frac{nn_{1}}{2}.$$

As each $|t_{ij}| \geq 2$, the above ensures that
$2 \geq n(n_{1}+r_{1}-2).$
Now, if $n_{1}+r_{1} \geq 3$, then $n=2$ (as required) and, necessarily, $n_{1}+r_{1}=3$. If $n_{1}+r_{1} \leq
2$, then $K_{j}$ will be elementary, a contradiction.
In the case $(\alpha,\beta) \in \{(1,0),(0,1)\}$, we have $\widetilde{g}=1$
and $$2 \geq 1+n\sum_{k=1}^{r_{1}}\left(1-\frac{1}{|t_{ik}|}\right)+ \frac{nn_{1}}{2}.$$

Again, as $|t_{1k}| \geq 2$, one obtains that 
$2 \geq n(n_{1}+r_{1}).$ Then $n=2$ and $n_{1}+r_{1}=1$, but again this makes $K_{j}$ to be elementary, a contradiction.

\smallskip
As a consequence of the above, we see that $K_{j}$ is generated by a reflection $\rho_{j1}$ and three other
involutions $\rho_{j2}$, $\rho_{j3}$, $\rho_{j4}$, each one commuting with $\rho_{j1}$ (some of the involutions
may be reflections and others may be imaginary reflections). Note also that $K_{j}$ keeps invariant the circle of
fixed points of $\rho_{j1}$, that is, the limit set of $K_{j}={\mathbb Z}_{2} \oplus ({\mathbb Z}_{2}*{\mathbb
Z}_{2}*{\mathbb Z}_{2})$ is contained in the circle of fixed points of $\rho_{j1}$. As $K_{j}$ has finite index in $K$, the limit set of $K$ is contained in such a circle, and 
as the limit set of
$\widehat{K}$ is infinite, such a circle is uniquely determined by it. As a consequence,
$\rho_{11}=\rho_{21}=\rho_{31}=\rho$. This asserts that $\theta(\rho) \in \theta(K_{1}) \cap \theta(K_{2}) \cap
\theta(K_{3})$. 

As, by the definition of the groups $K_{j}$'s, (i) $\tau_{j} \notin \theta(K_{1}) \cap \theta(K_{2})
\cap \theta(K_{3})$, (ii) $\theta(\rho)$ is a symmetry and (iii) the symmetries in $H$ are exactly $\tau_{1}$, $\tau_{2}$,
$\tau_{3}$ and $\tau_{1}\tau_{2}\tau_{3}$, the only possibility is to have
$\theta(\rho)=\tau_{1}\tau_{2}\tau_{3}$, from which, for instance, $\tau_{1}\tau_{2}\tau_{3} \in \langle
\tau_{1},\tau_{2}\rangle$; obligating to have that $\tau_{3} \in \langle \tau_{1},\tau_{2}\rangle$, a
contradiction.
\end{proof}

\subsubsection{Case $g=3$}
If $q_{23} \geq 3$, then $m_{2}+m_{3} \leq 4$. Since $m_{1} \leq g+1=4$, we get in this case that
$m_{1}+m_{2}+m_{3} \leq 8$.
Let us now assume $q_{12}=q_{13}=q_{23}=2$, in which case $\langle \tau_{1},\tau_{2},\tau_{3}\rangle \cong
{\mathbb Z}_{2}^{3}$. We may reorder again the indices to assume that $m_{1} \leq m_{2} \leq m_{3} \leq 4$. If
$m_{3}=4$, then inequality \eqref{ec1} asserts that $m_{1}+m_{2} \leq 2$, so $m_{1}+m_{2}+m_{3} \leq 8$.  Now,
the only case with $m_{3} \leq 3$ for which we do not have $m_{1}+m_{2}+m_{3} \leq 8$ is when
$m_{1}=m_{2}=m_{3}=3$. Claim \ref{claim5.2} ends the proof for $g=3$.
$\blacksquare$

\begin{claim}\label{claim5.2}
The case $m_{1}=m_{2}=m_{3}=3$ is not possible for $g=3$.
\end{claim}
\begin{proof}The Proof follows the same ideas as for the Claim \ref{hecho1}. Let $\Gamma$ be a Schottky group of rank $g=3$, with region of discontinuity $\Omega$,  so that the handlebody
$M=({\mathbb H}^{3} \cup \Omega)/\Gamma$ admits three symmetries $\tau_{1}$, $\tau_{2}$ and $\tau_{3}$, each of
them having exactly three connected components of fixed points, and with $H=\langle
\tau_{1},\tau_{2},\tau_{3}\rangle={\mathbb Z}_{2}^{3}$. Let $\pi:{\mathbb H}^{3} \cup \Omega \to M$ be the universal covering induced by $\Gamma$ and let $\widehat{K}$
be the extended Kleinian group obtained by lifting $H$ under $\pi$. There is a surjective homomorphism
$\theta:\widehat{K} \to H$ with $\Gamma$ as its kernel. Let $K_{j}=\theta^{-1}(\langle
\tau_{r},\tau_{s}\rangle)$, where $r \neq s$ and $r,s \in \{1,2,3\}-\{j\}$; so $\theta(K_{j})={\mathbb
Z}_{2}^{2}$. The extended Kleinian group $K_{j}$ is a dihedral extended Schottky group and has index two in $\widehat{K}$,
so it has region of discontinuity $\Omega$. As consequence of Theorem \ref{const0}, $K_{j}$ is constructed by using $\alpha$ cyclic groups generated by
reflections, $\beta$ cyclic groups generated by imaginary reflections, $\gamma$ cyclic groups generated by
loxodromic transformations, $\delta$ cyclic groups generated by glide-reflections, $\rho$ cyclic groups generated by an elliptic transformation of order two,
$\eta$ groups generated by an elliptic transformation of order two, $\kappa$ groups generated by an elliptic transformation of order two and a glide-reflection
 and $\varepsilon$ basic extended dihedral groups
$\Gamma_{1}$,..., $\Gamma_{\varepsilon}$, so that $\alpha+\beta+\delta
+\varepsilon>0$. 
 
As before, $\Omega/K_{j}^{+}$ will have genus
$\widetilde{g}=\alpha+\beta+2(\gamma+\delta+\kappa+\eta)+\varepsilon+g_{1}+\cdots+g_{\varepsilon} -1,$
with $2(2\rho+r_{1}+\cdots+r_{\varepsilon}+n_{1}+\cdots+n_{\varepsilon})$ conical points of order $2$,  and so that
$$2 \geq 2(\widetilde{g}-1)+\sum_{i=1}^{\varepsilon}(r_{i}+n_{i}).$$

In particular, $\widetilde{g} \in \{0,1\}$.
If $\alpha=\beta=\varepsilon=0$ (so $\delta+\kappa>0$), then $\widetilde{g}=2(\gamma+\delta+\kappa+\eta)-1$, so $\gamma=\eta=0$ and $\delta+\kappa=1$, from which we obtain that $K_{j}$ is elementary, a contradiction.
As $\alpha+\beta+\varepsilon>0$, then $\gamma=\delta=\kappa=\eta=0$. If $\varepsilon=0$, the same
asserts that $K_{j}$ will be elementary group containing the non-elementary group $\Gamma$, a contradiction, so
$\varepsilon \geq 1$. If $\varepsilon \geq 3$, then $\alpha+\beta<0$, a contradiction.
If $\varepsilon=2$, then $\alpha+\beta=\widetilde{g}-1$, so the only possible case is to
have $\widetilde{g}=1$ and $\alpha=\beta=0$. In this way, 
$$(\alpha,\beta,\varepsilon;\widetilde{g}) \in
\{(0,0,1;0), (1,0,1;1), (0,1,1;1), (0,0,2;1)\}.$$

Let us consider the case $(\alpha,\beta,\varepsilon;\widetilde{g})=(0,0,2;1)$. In this case
$K_{j}$ is free product of two groups $\Gamma_{1}=\langle \rho_{1},\eta_{1}\rangle={\mathbb Z}_{2}^{2}$ and
$\Gamma_{2}=\langle \rho_{2},\eta_{2}\rangle={\mathbb Z}_{2}^{2}$, where $\rho_{k}$ is reflection and $\eta_{k}$
is either a reflection or an imaginary reflection. In this case a complete set of symmetries of $K_{j}$ is given
by $\{\rho_{1},\rho_{2},\eta_{1},\eta_{2}\}$. Let us note that $\theta(\rho_{k}) \neq \theta(\eta_{k})$ since
otherwise the elliptic element of order two $\rho_{k}\eta_{k} \in \Gamma$, a contradiction to the fact that
$\Gamma$ is torsion free. As
$${\rm C}(K_{j},\rho_{k})={\rm C}(K_{j},\eta_{k})=\langle
\rho_{k},\eta_{k}\rangle={\mathbb Z}_{2}^{2}$$ and that $\theta(\rho_{k}) \neq \theta(\eta_{k})$, we may see from
Theorem \ref{teo1} that the number of fixed points of any of the symmetries $\theta(\rho_{1})$ and
$\theta(\eta_{1})$ is at most $2$, a contradiction to the fact we are assuming the symmetries $\tau_{1}$,
$\tau_{2}$ and $\tau_{3}$ each one has exactly $3$ components.

Let us consider the case $(\alpha,\beta,\varepsilon; \widetilde{g})=(0,0,1;0)$. In this case
$K_{j}=\langle \rho, \eta_{1},\eta_{2},\eta_{3},\eta_{4}\rangle$, where $\rho$ is a reflection and each of the
$\eta_{k}$ is either a reflection or an imaginary reflection commuting with $\rho$ (each element of $K_{j}$ keeps
invariant the circle of fixed points of $\rho$). As before, $\theta(\rho) \neq \theta(\eta_{k})$, for each
$k=1,2,3,4$. So, in particular, $\theta(\eta_{1})=\theta(\eta_{2})=\theta(\eta_{3})=\theta(\eta_{4})$. In this
case, a complete set of symmetries of $K_{j}$ is given by $\{\rho, \eta_{1},\eta_{2},\eta_{3},\eta_{4}\}$ and
$${\rm C}(K_{j},\rho)=K_{j}, \; {\rm C}(K_{j},\eta_{k})=\langle \rho, \eta_{k}\rangle={\mathbb Z}_{2}^{2}$$
and it follows, from Theorem \ref{teo1}, that $\theta(\rho)$ has at most $1$ connected components
of fixed points, a contradiction to the assumption the symmetries have $3$ conected components.

Let now $(\alpha,\beta,\varepsilon; \widetilde{g})=(1,0,1;0)$ (similar
arguments for the case $(\alpha,\beta,\varepsilon;\widetilde{g})=(0,1,1;0)$). In this case
$K_{j}$ is a free product of a cyclic group generated by a reflection $\zeta$ and a group $K_{j}^{0}= \langle
\rho, \eta_{1},\eta_{2},\eta_{3},\eta_{4}\rangle$, where $\rho$ is a reflection and each of the $\eta_{k}$ is
either a reflection or an imaginary reflection commuting with $\rho$ (each element of $K_{j}^{0}$ keeps invariant
the circle of fixed points of $\rho$). Again, as before, $\theta(\rho) \neq \theta(\eta_{k})$, for each
$k=1,2,3,4$. So, in particular, $\theta(\eta_{1})=\theta(\eta_{2})=\theta(\eta_{3})=\theta(\eta_{4})$. In this
case, a complete set of symmetries of $K_{j}$ is given by $\{\zeta, \rho, \eta_{1},\eta_{2},\eta_{3},\eta_{4}\}$
and
$${\rm C}(K_{j},\zeta)=\langle \zeta\rangle={\mathbb Z}_{2}, \; {\rm C}(K_{j},
\rho)=K_{j}^{0}, \; {\rm C}(K_{j},\eta_{k})=\langle \rho, \eta_{k}\rangle={\mathbb Z}_{2}^{2}.$$

The only way for both symmetries in $\theta(K_{j})$ to have exactly $3$ connected components of fixed points is
to have that $\theta(\zeta)=\theta(\rho)$. But in this case we should have that $\Gamma$ is the Schottky group
generated by the loxodromic transformations
$\rho\zeta, \eta_{4}\eta_{1}, \eta_{4}\eta_{2}, \eta_{4}\eta_{3},$
which is of rank $4$, a contradiction.

\end{proof}

\subsubsection{Case $g\geq 4$}
In this case, inequality \eqref{ec1} asserts
\begin{equation}\label{ec2}
m_{1}+m_{2}+m_{3} \leq \left( \frac{1}{q_{12}}+\frac{1}{q_{13}}+\frac{1}{q_{23}}\right)(g-1)+6.
\end{equation}

If $q_{12}^{-1}+q_{13}^{-1}+q_{23}^{-1}\leq 1$, then the above ensures that
$m_{1}+m_{2}+m_{3} \leq g+5$.
If $q_{12}^{-1}+q_{13}^{-1}+q_{23}^{-1}> 1$, then we have the following cases:
$$
\begin{array}{|c|c|c|c|}\hline
&&&\\[-3mm]
q_{12} & q_{13} & q_{23} & H=\langle \tau_{1},\tau_{2},\tau_{3}\rangle \\
&&&\\[-3mm]
\hline
2& 2 & r \geq 2 & {\mathbb Z}_{2} \times {\mathbb D}_{r} \\\hline
2 & 3 & 3 & {\mathbb Z}_{2} \ltimes {\mathcal A}_{4} \\\hline
2 & 3 & 4 & {\mathbb Z}_{2} \ltimes {\mathfrak S}_{4} \\\hline
2 & 3 & 5 & {\mathbb Z}_{2} \ltimes {\mathcal A}_{5} \\\hline
\end{array}
$$
where ${\mathbb Z}_{2}=\langle \tau_{1}\rangle$, ${r}=\langle \tau_{2},\tau_{3}\rangle$ and ${\mathcal A}_{4}$,
${\mathfrak S}_{4}$ and ${\mathcal A}_{5}$ are generated by $\langle \tau_{1}\tau_{2}, \tau_{1}\tau_{3}\rangle$
in each case. In the cases $q_{13}=3$ one has that $\langle \tau_{1},\tau_{3}\rangle \cong {\mathbb D}_3    $, so $\tau_{1}$ and
$\tau_{3}$ are conjugated, that is, $m_{1}=m_{3}$. It follows that
$$2m_{1}=2m_{3}=m_{1}+m_{3} \leq 2 \left[ \frac{g-1}{3}\right]+4,$$
so $\displaystyle{m_{1}=m_{3} \leq \left[ \frac{g-1}{3}\right]+2}$. As $q_{23} \geq 3$, 
$$m_{1}+(m_{2}+m_{3}) \leq \left(\left[ \frac{g-1}{3}\right]+2\right) +
\left( 2 \left[ \frac{g-1}{q_{23}}\right]+4\right)\leq 3\left[ \frac{g-1}{3}\right]+6 \leq g+5.$$

Now, for the case $q_{12}=q_{13}=2$ and $q_{23}=r \geq 2$, inequality \eqref{ec1} asserts
$$m_{1}+m_{2}+m_{3} \leq 2\left[ \frac{g-1}{2}\right]+\left[ \frac{g-1}{r}\right]+6 \leq 2
\left( \frac{g-1}{2}\right)+\frac{g-1}{r}+6=\frac{(r+1)g+5r-1}{r}.$$

\section{Examples}\label{Sec:4}

\begin{example}[Sharp upper bound in part (1) of Theorem \ref{sumak=2}]
Let $q \geq 2$ and let us consider an extended Schottky group $K$, constructed by Theorem
\ref{maintheo} using exactly $r+1$ reflections $E_{1}$,\ldots , $E_{r+1}$. The orbifold uniformized
by  $K$ is a planar surface bounded by exactly $r+1$ boundary loops. Let us consider the surjective
homomorphism
$$\theta:K \to {\rm D}_{q}=\langle x,y: x^{2}=y^{2}=(yx)^{q}=1\rangle: \;
\theta(E_{1})=x, \; \theta(E_{2})=\cdots=\theta(E_{r+1})=y.$$

Let $\Gamma=\ker(\theta)$. If we set $L=E_{2}E_{1}$ and $C_{j}=E_{r+1}E_{j+1}$, for $j=1,\ldots ,r-1$, then it is
not difficult to see that
$$\Gamma=\langle L^{q},C_{1},\ldots ,C_{r-1}, LC_{1}L^{-1},\ldots ,LC_{r-1}L^{-1},\ldots , L^{q-1}C_{1}L^{-q+1},
\ldots ,L^{q-1}C_{r-1}L^{-q+1} \rangle$$
is a Schottky group of rank $g=(r-1)q+1$. Let us consider the extended Schottky groups
$$\Gamma_{1}=\theta^{-1}(\langle x\rangle)=\langle E_{1}, \Gamma\rangle, \quad 
\Gamma_{2}=\theta^{-1}(\langle y\rangle)=\langle E_{2}, \Gamma\rangle.$$

As the group $K$ contains no imaginary reflections nor real Schottky groups, it follows that
$\Gamma_{j}$ is constructed, by Theorem \ref{maintheo}, using $\alpha_{i}$ reflections and $\beta_{i}$ loxodromic
and glide-reflection transformations. The handlebody $M_{\Gamma}$ admits two symmetries, say $\tau_{1}$ and $\tau_{2}$
induced by $\Gamma_{1}$ and $\Gamma_{1}$ respectively. The number of connected components of fixed points of $\tau_{i}$ is
exactly $\alpha_{i}$. As consequence of Theorem \ref{sumak=2}, we should have $\alpha_{1}+\alpha_{2} \leq
2(r+1)$. Next, we proceed to see that in fact we have an equality, showing that the upper bound in Theorem
\ref{sumak=2} is sharp. In order to achieve the above, we use Theorem \ref{teo1}. A complete set of symmetries in
$K$ is provided by $E_{1}$,\ldots , $E_{r+1}$. We also should note that ${\rm C}
(K,E_i)=\langle E_{i}\rangle$, for every $j$, and that $J(i)=\emptyset$ and $I(i)=F(i)$.

\subsubsection*{Case $q$ odd}
In this case, $I(1)=\{1,2,\ldots ,r+1\}$ and ${\rm C} ({\mathbb D}_{q},x)=\langle x
\rangle$. It follows, from Theorem \ref{teo1}, that
$\alpha_{1}=\alpha_{2}=r+1$
and we are done.

\subsubsection*{Case $q$ even}
In this case, $I(1)=\{1\}$, $I(2)=\{2,\ldots ,r+1\}$, ${\rm C} ({\mathbb
D}_{q},x)=\langle x, (yx)^{q/2} \rangle\cong {\mathbb Z}_{2}^{2}$ and ${\rm C}
({\mathbb D}_{q},y)=\langle y, (yx)^{q/2} \rangle\cong {\mathbb Z}_{2}^{2}$. It
follows, from Theorem \ref{teo1}, that
$\alpha_{1}=2, \; \alpha_{2}=2r$
and we are done.
\end{example}

\begin{example}[Sharp upper bounds in part (2) of Theorem \ref{sumak=2} for $g=2$]
Let $K$ be the extended Kleinian group generated by four reflections, say $\eta_{1}$,\ldots,
$\eta_{4}$, where $\eta_{1}(z)=\overline{z}$, $\eta_{2}(z)=-\overline{z}$, $\eta_{4}(z)=1/\overline{z}$ and
$\eta_{3}$ is the reflection on a circle $\Sigma$ which is orthogonal to the unit circle and disjoint from the
real and imaginary axis. One may see that
$$K=\langle \eta_{4} \rangle \times ( \langle \eta_{1},\eta_{2} \rangle \ast
\langle \eta_{3} \rangle) \cong {\mathbb Z}_{2} \times ({\mathbb D}_{2} * {\mathbb Z}_{2}).$$

If $\Gamma=\langle A=\eta_{1}\eta_{3}, B=\eta_{1}\eta_{2}\eta_{3}\eta_{2}\rangle$, then it is a Schottky group of rank $2$ 
with a fundamental domain bounded by the $4$ circles
$C_{1}=\Sigma$, $C'_{1}=\eta_{1}(\Sigma)$, $C_{2}=\eta_{2}(\Sigma)$ and $C'_{2}=\eta_{2}\eta_{1}(\Sigma)$ such that
$A(C_{1})=C'_{1}$ and $B(C_{2})=C'_{2}$.
As $\eta_{1} A \eta_{1}=\eta_{3} A \eta_{3}=A^{-1}$, $\eta_{2}A\eta_{2}=\eta_{4}B\eta_{4}=B$,
$\eta_{4}A\eta_{4}=\eta_{2}B\eta_{2}=A$, $\eta_{1}B\eta_{1}=B^{-1}$ and $\eta_{3}B\eta_{3}=A^{-1}B^{-1}A$, it
follows that $\Gamma$ is a normal subgroup of $K$. Moreover, $K/\Gamma \cong {\mathbb
Z}_{2}^{3}={\mathbb Z}_{2} \times {\rm D}_{2}.$
On the handlebody $M_{\Gamma}$ both $\eta_{1}$ and $\eta_{3}$ induce the same symmetry
$\tau_{1}$ with exactly $3$ connected components of fixed points (each of them a disc), $\eta_{2}$ induces a
symmetry $\tau_{2}$ with exactly one connected component of fixed points (a dividing disc) and $\eta_{4}$ induces
a symmetry $\tau_{3}$ with exactly one connected component of fixed points (this being an sphere with three
borders). It follows that the three induced symmetries are non-conjugated.
\end{example}

\begin{example}[Sharp upper bounds in part (2) of Theorem \ref{sumak=2} for $g = 3$]
Let $K$ be the extended Kleinian group generated by three reflections,
$\eta_{1}(z)=\overline{z}$, $\eta_{2}(z)=-\overline{z}$ and $\eta_{3}$ the reflection on a circle $\Sigma$
disjoint from the real and imaginary lines. One has that
$$K=\langle \eta_{1}, \eta_{2} \rangle * \langle \eta_{3} \rangle \cong {\rm D}_2 * {\mathbb Z}_{2}.$$

The group $\Gamma=\langle A_{1}=(\eta_{3}\eta_{1})^{2},A_{2}=(\eta_{3}\eta_{2})^{2},
A_{3}=\eta_{3}\eta_{2}\eta_{1}\eta_{3}\eta_{1}\eta_{2}\rangle$ is a
Schottky group of rank $3$ with a fundamental domain bounded by the $6$ circles $C_{1}=\eta_{1}(\Sigma)$,
$C'_{1}=\eta_{3}(C_{1})$, $C_{2}=\eta_{2}(\Sigma)$, $C'_{2}=\eta_{3}(C_{2})$, $C_{3}=\eta_{2}(C_{1})$ and
$C'_{3}=\eta_{3}(C_{3})$,  such that $A_{1}(C_{1})=C'_{1}$, $A_{2}(C_{2})=C'_{2}$ and $A_{3}(C_{3})=C'_{3}$.
Similarly to the previous case, one may check that $\Gamma$ is a normal subgroup of $K$ and that 
$K/\Gamma \cong {\mathbb Z}_{2}^{3}$. The reflection $\eta_{j}$ induces a symmetry $\tau_{j}$ (for each
$j=1,2,3$) on the handlebody $M_{\Gamma}$. In this case (either by direct inspection or by  using
Theorem \ref{teo1}) each of $\tau_{1}$ and $\tau_{2}$ has exactly $2$ connected components of fixed
points, and $\tau_{3}$ has $4$ connect components of fixed points. In some cases the handlebody $M_{\Gamma}$ will have
extra automorphisms conjugating $\tau_{1}$ with $\tau_{2}$ (for instance, when $\Sigma$ is orthogonal to the line
$L=\{Re(z)=Im(z) \}$); but in the generic case this will not happen (that is, the three of them will be
non-conjugated).
\end{example}

\begin{example}[Sharp upper bounds in part (2) of Theorem \ref{sumak=2} for $g \geq 4 $ and $H \ncong {\mathbb Z}_{2} \times {\mathbb D}_r $]
Let $K$ be an extended Schottky group constructed by using $2n+3$ reflections (or imaginary reflections
or combination of them), say $\eta_{1}$,\ldots, $\eta_{2n+3}$. Consider the surjective homomorphism
$\theta:K \to {\mathbb D}_{3} =\langle a,b: a^{2}=b^{2}=(ab)^{3}=1\rangle$, defined by  
$\theta(\eta_{1})=\cdots =\theta(\eta_{2n+2})=a$, $\theta(\eta_{2n+3})=b$.
In this case $\Gamma=\ker(\theta)$ is a Schottky group of rank $g=6n+4$.  The handlebody $M_{\Gamma}$
admits the symmetries $\tau_{1}=a$, $\tau_{2}=b$ and $\tau_{3}=bab$. By direct inspection (or
by using Theorem \ref{teo1}) it can be checked that $m_{1}=m_{2}=m_{3}=2n+3$.
\end{example}

\begin{example}[Sharp upper bounds in part (2) of Theorem \ref{sumak=2} for $g \geq 4$ and $H \cong {\mathbb Z}_{2} \times {\mathbb D}_r$]
Let $K$ be an extended Schottky group constructed by using $3$ reflections (or imaginary reflections or
combination of them), say $\eta_{1}$, $\eta_{2}$ and $\eta_{3}$. Consider the surjective
homomorphism
$\theta:K \to {\mathbb Z}_{2} \times {\mathbb D}_3  =\langle c\rangle \times \langle a,b:
a^{2}=b^{2}=(ab)^{3}=1\rangle$, defined by  
$\theta(\eta_{1})=c$, $\theta(\eta_{2})=a$ and $\theta(\eta_{3})=b$.
In this case $\Gamma=\ker(\theta)$ is a Schottky group of rank $g=2r+1$.  The handlebody $M_{\Gamma}$
admits the symmetries $\tau_{1}=c$, $\tau_{2}=a$ and $\tau_{3}=b$. In this case, $m_{1}=2r$ and $m_{2}=m_{3}=4$.
\end{example}


\end{document}